 \numberwithin{equation}{section}
\definecolor{darkgreen}{rgb}{0,0.45,0} 
\def\black{\color{black}}
\newcommand{\cb}{\ensuremath{\mathcal B}\xspace}
\newcommand{\cc}{\ensuremath{\mathcal C}\xspace}
\newcommand{\cd}{\ensuremath{\mathcal D}\xspace}
\newcommand{\ct}{\ensuremath{\mathcal T}\xspace}
\newcommand{\cx}{\ensuremath{\mathcal X}\xspace}
\newcommand{\Ab}{\textsf{Ab}\xspace}
\newcommand{\exreg}{\ensuremath{_{\textnormal{ex/reg}}}}
\DeclareMathOperator{\AbSh}{AbSh}
\DeclareMathOperator{\Rel}{Rel}
\newcommand{\Mod}{\textnormal{-$\mathbf{Mod}$}\xspace}
\newcommand{\pid}{\mathfrak{p}}
\newcommand{\two}{\ensuremath{\mathbf{2}\xspace}}
\newcommand{\x}{\times}
\renewcommand{\phi}{\varphi}
\def\1c#1{\stackrel{#1}{\to}}
  \newtheorem{proposition}{Proposition}[section]
  \newtheorem{lemma}[proposition]{Lemma}
  \newtheorem{theorem}[proposition]{Theorem}
  \theoremstyle{definition}
  \newtheorem{definition}[proposition]{Definition}
  \newtheorem{example}[proposition]{Example}
  \theoremstyle{remark}
  \newtheorem{remark}[proposition]{Remark}
  \newcounter{c}
  \newcommand{\etyk}[1]{\vspace{-7.4mm}$$\begin{equation}\Label{#1}
  \addtocounter{c}{1}}
  \renewcommand{\]}{\ifnum \value{c}=1 $$\else \end{equation}\fi}
\begin{document}

 \title{Semi-localizations of semi-abelian categories}

\author{Marino Gran}
\address{Institut de Recherche en Math\'ematique et Physique, Universit\'e catholique de Louvain, Chemin du Cyclotron 2, 1348 Louvain-la-Neuve, Belgium}
\email{marino.gran@uclouvain.be}

\author{Stephen Lack}
\address{Department of Mathematics, Macquarie University NSW 2109, Australia}
\email{steve.lack@mq.edu.au}

\thanks{ Lack acknowledges with gratitude the support of the Australian Research Council Discovery Grant DP130101969 and of the Universit\'e catholique de Louvain, and an Australian Research Council Future Fellowship. He is also grateful to the mathematicians in Louvain-la-Neuve for their warm and generous hospitality.}

\date{\today}
\subjclass[2010]{18E40, 18B10, 18A40, 08C05. }

\begin{abstract}
A semi-localization of a category is a full reflective subcategory with the property that the reflector is semi-left-exact. There are many interesting examples of semi-localizations, as for instance any torsion-free subcategory of a semi-abelian category. By specializing a result due to S. Mantovani, we first characterize the categories which are semi-localizations of exact Mal'tsev categories. We then prove a new characterization of protomodular categories in terms of binary relations, allowing us to obtain an abstract characterization of the semi-localizations of exact protomodular categories. This result is very useful to study the (hereditarily)-torsion-free subcategories of semi-abelian categories. Some examples are considered in detail in the categories of groups, crossed modules, commutative rings and topological groups. We finally explain how these results extend the corresponding ones obtained in the abelian context by W. Rump. 

\end{abstract}
  
\maketitle


\section{Introduction}\label{sect:torsion-free}

In many areas of mathematics it is useful to {\em localize} a problem, so that one can temporarily focus on behaviour which is at or near a certain ``location'', such as a point in a space or a prime ideal in a ring. 

For a more detailed example, if $\pid$ is a prime ideal in a commutative ring $R$, and $S$ is the complement $R\setminus\pid$, then we may localize at $\pid$ by forming the ring $S^{-1}R$ of fractions. Then restriction along the canonical map $R\to S^{-1}R$ determines a fully faithful functor $S^{-1}R\Mod\to R\Mod$ with a left adjoint sending an $R$-module $M$ to the localized module $S^{-1}M$; furthermore, this left adjoint is an exact functor, and in particular preserves finite limits. Thus we can identify $S^{-1}R\Mod$ with a full reflective subcategory of the category $R\Mod$ of $R$-modules for which the reflector preserves finite limits.

 A similar situation occurs when $X$ is a topological space and $U$ an open set in $X$:  the category $\AbSh(U)$ of sheaves of abelian groups on $U$ can be identified with a full reflective subcategory of the corresponding category $\AbSh(X)$ for which the reflector preserves finite limits. 

\black

Motivated by these and similar examples, a {\em localization} of an abelian category \cc is defined to be a full reflective subcategory \cb for which the reflector $L\colon\cc\to\cb$ \black preserves finite limits (equivalently, for which the reflector is an exact functor).\black  The resulting category \cb is always abelian; conversely, the Gabriel-Popescu theorem \cite{GP} asserts that the Grothendieck abelian categories of \cite{Gr} are precisely those categories which occur as localizations of $R\Mod$ for a commutative ring $R$.

Localizations of abelian categories have been studied intensively, and understood from many points of view. For example, given a localization $L\colon\cc\to\cb$, we may consider the class $\Sigma$ of morphisms $f$ in \cc for which $Lf$ is invertible,
or we may consider the full subcategory \ct consisting of all $C\in\cc$ for which $LC=0$. A full subcategory \ct of \cc arising in this way for some localization of \cc is called a {\em localizing subcategory}. If \cc is locally presentable, and in particular if \cc is the category of modules over some ring, then such localizing subcategories have an elementary characterization, and are in bijective correspondence with localizations of \cc. 

Then again, given the localization $L$, we may define the full subcategory \cx consisting of those $C\in\cc$ for which the reflection $C\to LC$ is a monomorphism. This \cx forms part of a hereditary torsion theory in \cc, in the sense of the following definition, when equipped with the corresponding localizing subcategory \ct.

\begin{definition}
Let \cc be a category with a zero object as well as kernels and cokernels.
A {\em torsion theory} $(\ct,\cx)$ in \cc consists of full (replete) subcategories \ct and \cx of \cc such that 
\begin{itemize}
\item if $T\in\ct$ and $X \in\cx$, then the only morphism from $T$ to $X$ is the zero morphism $T \rightarrow 0 \rightarrow X$;
\item for every object $C\in\cc$ there is a short exact sequence
\begin{equation}\label{shortexact}
\xymatrix{
0 \ar[r] & TC \ar[r]^{t_C}  & C \ar[r]^{{\eta}_C} &  LC   \ar[r] & 0 }
\end{equation}
with $TC \in\ct$ and $LC \in\cx$.
\end{itemize}
The torsion theory is said to be {\em hereditary} if \ct is closed in \cc under subobjects.
\end{definition}

Once again, if \cc is a locally presentable abelian category then localizations on \cc are in bijection with hereditary torsion theories, via the correspondence described above.

The prototypical example of a (hereditary) torsion theory has \cc the category of abelian groups, with $\ct = \mathsf{Ab}_{t.}$ the full subcategory of torsion abelian groups and $\cx = \mathsf{Ab}_{t.f.}$ the full subcategory of torsion-free abelian groups; in this case the short exact sequence \eqref{shortexact} is the one determined by the subgroup $TC$ of torsion elements of the abelian group $C$.  For any prime $p$ there is a radical in the category of abelian groups involving $p$-torsion: indeed, the $p$-primary component $T_p C$ of the subgroup $TC$ of all torsion elements of an abelian group $C$ is a radical, that naturally induces a hereditary torsion theory for each prime $p$.  
By analogy with the classical example $( \mathsf{Ab}_{t.}, \mathsf{Ab}_{t.f.})$ just recalled, for a general torsion theory $( \ct, \cx )$ in an abelian category $\cc$, the subcategory \ct is still called the \emph{torsion subcategory} and the subcategory $\cx$ the \emph{torsion-free subcategory}.

For non-abelian algebraic structures, the interpretation of the concept of localization as a finite limit preserving reflector seems to be highly inadequate even in the most fundamental cases. For instance, in the non-abelian category $\mathsf{Grp}$ of groups there is no non-trivial localization (as explained in \cite{BCGS}, for instance). 
This naturally leads to more subtle notions in order to capture the concept outside the additive context of abelian categories.

Torsion theories were first studied for categories of modules; the case of general abelian categories goes back to \cite{Di}, while they have recently been studied in non-abelian contexts by various authors \cite{BG-torsion, CDT, Gran-Rossi, Rosicky-Tholen, JT-torsion, EverGran, EverGran2}. In particular, they have been studied in \emph{homological categories} \cite{BB}: these are categories which are finitely complete, pointed, regular, and protomodular  \cite{Bourn}. This latter property reduces, in the presence of a zero-object, to the validity of the Split Short Five Lemma. The categories of groups, rings, Lie algebras, loops, crossed modules, ${\mathbb C}^{*}$-algebras, cocommutative Hopf algebras (over a field of characteristic zero) \cite{Kadjo}, and topological groups are all examples of homological categories. 
A torsion-free subcategory $\cx$ of a homological category $\cc$ is again a homological category: this follows from the fact that limits are computed in $\cx$ as in $\cc$, and the factorization $f=ip$ in $\cc$ of any arrow $f$ in $\cx$ as a regular epimorphism $p$ followed by a monomorphism $i$ is also the required factorization of $f$ in $\cx$, since $\cx$ is closed in $\cc$ under subobjects. 

Most of the non-abelian examples of torsion theories which have recently been discovered involve a base category $\cc$ which is not just homological, but \emph{semi-abelian} \cite{JMT}. Recall that a homological category with binary coproducts is semi-abelian when it is also Barr-exact: any equivalence relation in it occurs as the kernel pair of an arrow. All the examples of homological categories mentioned above, except the category of topological groups, are in fact semi-abelian. For instance, the semi-abelian category $\cc = \mathsf{XMod}$ of crossed modules contains the torsion theory $(\ct, \cx) = (\mathsf{Ab}, \mathsf{NormMono})$ (see Example \ref{groupoids}), where $\mathsf{NormMono}$ is its full subcategory of normal monomorphisms and $\mathsf{Ab}$ its full subcategory of abelian groups (an abelian group $A$ is seen here as a crossed module of the form $A \rightarrow 0$).
Further nice examples in the semi-abelian context are provided by the torsion theory $(\mathsf{Nil}, \mathsf{Red})$ of nilpotent rings and reduced rings in the category $\mathsf{CRng}$ of commutative rings (Example \ref{reduced}), or by the torsion theory $(\mathsf{Grp(Conn)}, \mathsf{Grp(Prof)})$ of connected and profinite groups in the category $\mathsf{Grp(Comp)}$ of compact Hausdorff group (see \cite{EverGran}).

 Unlike the homological case, it is no longer true that a torsion-free subcategory of a semi-abelian category is again semi-abelian, as the classical example of torsion-free abelian groups in the category of abelian groups recalled above shows. It is then natural to look for an \emph{abstract characterization of those homological categories which occur as torsion-free subcategories of a semi-abelian category}. This problem is the origin of the present work.

A better understanding of this problem, leading to a solution, can be obtained by looking at it from a more general perspective, and by ``weakening'' the notion of localization to that of \emph{semi-localization}. A remarkable property of the reflection
 $$\xymatrix@=30pt{
{\cc \, } \ar@<1ex>[r]_-{^{\perp}}^-{L} & {\, \cx \, }
\ar@<1ex>[l]^U  }
 $$
where $U \colon \cx \rightarrow \cc$ is the inclusion functor and $L \colon \cc \rightarrow \cx$ the reflector to a torsion-free subcategory, is that the reflector $L$, although not exact, is \emph{semi-left-exact} in the sense of \cite{CHK}. 
This means that the
 reflector $L \colon \cc \rightarrow \cx$ preserves all pullbacks of the form
$$
\xymatrix{P \ar[r] \ar[d] & UX \ar[d]^{Uf}  \\
Q \ar[r]_-{\eta_Q} & ULQ,
}
$$
where $f \colon X \rightarrow LQ$ is an arrow in $\cx$, and $\eta_Q$ is the $Q$-component of the unit $\eta$ of the adjunction. This kind of adjunction plays a central role in Categorical Galois Theory, where the term admissible adjunction is also used (see \cite{BorJan}, for instance). 
Following \cite{Mantovani-semilocalizations, PedicchioRosicky,Rosicky-Tholen}, we call a full reflective subcategory \cx of a category \cc a {\em semi-localization} if the reflection is semi-left exact. 

This significance for us of these semi-localizations is the result of 
\cite{BG-torsion} (see also \cite{CDT, EverGran, JT-torsion}) that the torsion-free subcategories of a homological category are precisely the semi-localizations for which the (components of the) reflector are regular epimorphisms.

Semi-localizations of exact categories have been studied by S. Mantovani in \cite{Mantovani-semilocalizations}, using the exact completion of a regular category  \cite{CarboniMantovani,CatsAlligators, exreg, Cruciani}. This construction is briefly recalled in Section \ref{Exact-regular}, while Mantovani's result together with an application to the study of semi-localizations of exact Mal'tsev categories are given in Section \ref{Mal'tsev}.
In order to study semi-localizations of exact protomodular and of semi-abelian categories it is important to understand the behaviour of the protomodularity property with respect to the construction of the exact completion of a regular category. The key result for this is given in Section \ref{protomodularity}, where a new characterization of protomodularity in terms of binary relations is given (Theorem \ref{caracterisation}), extending a theorem of Z. Janelidze \cite{Zurab-relations3}. This result easily implies that the exact completion of a regular protomodular category is again protomodular (Proposition \ref{completionprotomodular}), showing a difference between the properties of protomodularity and normality (see the last paragraph of \cite{Gran-Janelidze}). The solution of the problem of characterizing torsion-free subcategories of semi-abelian categories is then given in Theorem \ref{thm:semi-abelian-normal}. 
Some examples are explained in detail, and the relationship between the property characterizing semi-localizations of exact categories and the one saying that regular epimorphisms are effective descent morphisms is clarified (see Remarks \ref{remarkDescent} and \ref{Descent}). Section \ref{hereditary} deals with the hereditarily-torsion-free subcategories. The main results are then specialized to the context of abelian categories, recovering in particular two results due to W. Rump \cite{Rump} concerning torsion-free subcategories of abelian categories. 

\subsection*{Acknowledgement}

The first author thanks Tomas Everaert for useful conversations on the subject of this article.

\section{The exact completion of a regular category}\label{Exact-regular}

In this section we recall the fundamental construction which will be crucial to everything that follows, and some of its important properties.

The 2-category of regular categories, regular (=finite limit and regular epimorphism preserving) functors, and natural transformations has a full sub-2-category consisting of the exact categories. The inclusion 2-functor has a left biadjoint, taking a regular category \cx to its exact completion  {\em as a regular category}. We write $\cx\exreg$ for the resulting category; it is quite different to its exact completion as a category with finite limits.

We recall below a construction \cite{perugia,Cruciani,CatsAlligators} of the category $\cx\exreg$ using the calculus of relations. First recall that, for a category \cx with finite limits, a {\em relation} from an object $X$ to an object $Y$ is a subobject of the product $X\x Y$; since these can equally be seen as subobjects of $Y\x X$, any relation $R$ from $X$ to $Y$ determines an {\em opposite relation} $R^\circ$ from $Y$ to $X$. As usual, we identify a morphism $f\colon X\to Y$ with its graph $\binom{1_X}{f}\colon X\to X\x Y$, seen as a relation from $X$ to $Y$. A relation of this form is often called a {\em map}.  A relation $R$ from an object $X$ to itself is called a {\em binary relation} on $X$. Such a binary relation is said to be {\em reflexive} if $1_X\le R$, and {\em symmetric} if $R\le R^\circ$ (or equivalently $R^\circ\le R$, and so $R=R^\circ$).

If \cx is regular, then there is a bicategory $\Rel(\cx)$ whose objects are the objects of \cx, and whose morphisms are the relations between them. A binary relation $R$ is said to be {\em transitive} if the composite $RR$ satisfies $RR\le R$, and an {\em equivalence relation} if it is reflexive, symmetric, and transitive. 
The following properties follow immediately from the definition of composition of relations (see \cite{CLP}, for instance, for further properties of the composition of relations in a regular category):
\begin{lemma}\label{propertiesofrelations}
Let $p \colon X\rightarrow A$ be any arrow in a regular category $\cc$. Then:
\begin{enumerate}[(a)]
  \item{$p^{\circ}p$ is the kernel pair of $p$; }
 \item $p p^{\circ}=1$ { if and only if} $p$
  is a regular {epimorphism}.
\end{enumerate}
\end{lemma}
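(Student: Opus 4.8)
The plan is to unwind both statements directly from the definition of composition in $\Rel(\cc)$: to compose relations $R\colon X\to Y$ and $S\colon Y\to Z$ one pulls back $R$ and $S$ over their common object $Y$ and then takes the regular-epimorphism--monomorphism image factorization of the resulting span into $X\times Z$. Identifying $p$ with its graph $\binom{1_X}{p}\colon X\to X\times A$ and $p^\circ$ with $\binom{p}{1_X}\colon X\to A\times X$, the whole argument reduces to computing two such pullbacks and determining, in each case, what the image factorization does.

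For (a), I would form the composite $p^\circ p\colon X\to X$, whose middle object is $A$. Since the graph of $p$ is isomorphic to $X$ with its projection to $A$ equal to $p$, and similarly for $p^\circ$, the pullback over $A$ is exactly $X\times_A X$ equipped with its two projections to $X$ --- that is, the kernel pair of $p$. It then remains to note that the induced map $X\times_A X\to X\times X$ is jointly monic, being the defining pair of the kernel pair, so that the image factorization is trivial and $p^\circ p$ is literally the kernel-pair subobject. This is the step to watch: one must confirm that no information is lost in the image factorization, precisely because the span is already a monomorphism into $X\times X$.

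For (b), I would instead form $p p^\circ\colon A\to A$, whose middle object is now $X$. Here the pullback over $X$ collapses to $X$ itself, and the induced map into $A\times A$ is $\Delta_A\circ p$. Thus $p p^\circ$ is the image of $\Delta_A\circ p$. Writing the regular-epi--mono factorization $p=m\circ e$ with $e$ a regular epimorphism and $m$ a monomorphism, and noting that $\Delta_A$ is monic, the map $\Delta_A\circ p$ already displays its own image factorization as $(\Delta_A\circ m)\circ e$; hence $p p^\circ$ is the subobject $\Delta_A\circ m$, i.e.\ the diagonal restricted to the image of $p$. In particular $p p^\circ\le 1_A$ always holds.

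To finish (b) I would observe that the relation $1_A$ is the full diagonal $\Delta_A$, so that $p p^\circ=1_A$ holds exactly when the image of $p$ is all of $A$; this says that the monomorphism $m$ is an isomorphism, which is precisely the condition that $p$ be a regular epimorphism. The main obstacle throughout is bookkeeping rather than depth: one must keep the order of composition straight --- so that the kernel pair arises by pulling back over $A$ in (a) while the diagonal arises by pulling back over $X$ in (b) --- and correctly track when the image factorization is trivial and when it genuinely contracts the span.
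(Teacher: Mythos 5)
Your proposal is correct and is exactly the argument the paper has in mind: the paper gives no detailed proof, asserting only that both properties ``follow immediately from the definition of composition of relations,'' and your computation (pullback over $A$ yielding the jointly monic kernel pair in (a), pullback over $X$ yielding the image of $\Delta_A\circ p$ in (b)) is precisely that unwinding, carried out correctly, including the key observations that the image factorization is trivial in (a) and that $pp^\circ=1_A$ forces the mono part $m$ of $p$ to be invertible in (b).
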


Equivalence relations are always idempotent in $\Rel(\cx)$. One can freely split these idempotents to obtain a new bicategory $Q(\Rel(\cx))$, and this is once again a bicategory of relations. The category of maps in $Q(\Rel(\cx))$ is then $\cx\exreg$.

For an alternative construction of $\cx\exreg$, based on embedding \cx in the category of sheaves for the regular topology, see \cite{exreg}.

The unit $I\colon\cx\to\cx\exreg$ for the biadjunction is fully faithful, and its image is closed under subobjects. The universal property means in particular that if \cd is any exact category and $F\colon \cx\to\cd$ is a regular functor, then there is an essentially unique regular functor $G\colon \cx\exreg\to\cd$ with $FI\cong G$. 
In fact the construction of $\cx\exreg$ given above is clearly functorial, with respect to regular functors, and so we get a diagram 
$$\xymatrix{
\cx \ar[r]^{F} \ar[d]_{I} & \cd \ar[d]^{I'} \\
\cx\exreg \ar[r]_{F\exreg} & \cd\exreg, }$$
of regular functors which commutes up to isomorphism, in which $I$ and $I'$ are components of the unit of the biadjunction. Since \cd is exact, $I'$ is an equivalence, and composing its inverse with $F\exreg$ we get the required $G$. 

\begin{proposition}
Let \cx be a regular category and \cd an exact category, with $F\colon\cx\to\cd$ a fully faithful finite-limit-preserving functor.
\begin{enumerate}[(a)]
\item If  the image of $F$ is closed under subobjects then $F$ is a regular functor. 
\item In this case, the induced regular functor $G\colon\cx\exreg\to\cd$ is fully faithful.
\end{enumerate}
\end{proposition}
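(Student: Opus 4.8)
The plan is to establish (a) by showing that $F$, which already preserves finite limits, also preserves regular epimorphisms. Let $f\colon X\to Y$ be a regular epimorphism in \cx, and factor $Ff$ in the regular category \cd as a regular epimorphism $e\colon FX\to Z$ followed by a monomorphism $m\colon Z\to FY$. Since the image of $F$ is closed under subobjects and $F$ is fully faithful, the subobject $m$ of $FY$ is isomorphic to one of the form $Fg$ for a monomorphism $g\colon W\to Y$ in \cx, and then, $F$ being full, the regular epimorphism $e\colon FX\to FW$ is $Fh$ for some $h\colon X\to W$. Faithfulness gives $f=gh$; as $f$ is a regular, hence extremal, epimorphism factoring through the monomorphism $g$, the map $g$ is an isomorphism. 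Thus $m$ is invertible and $Ff\cong e$ is a regular epimorphism, proving (a). In particular $F$ now induces a homomorphism $\Rel(F)\colon\Rel(\cx)\to\Rel(\cd)$ of bicategories of relations, preserving composition, opposites, and the local order.

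For (b), writing $I'\colon\cd\to\cd\exreg$ for the unit (an equivalence since \cd is exact), we have $G\cong I'^{-1}\circ F\exreg$, so it suffices to prove that $F\exreg$ is fully faithful. Recall that an object of $\cx\exreg$ is an equivalence relation $(X,R)$, that the identity of $(X,R)$ in $Q(\Rel(\cx))$ is $R$ itself, and that a morphism $(X,R)\to(Y,S)$ is a relation $\phi\colon X\to Y$ with $S\phi R=\phi$, $\phi\phi^{\circ}\le S$, and $R\le\phi^{\circ}\phi$; on such a $\phi$ the functor $F\exreg$ acts by $\Rel(F)$, sending $\phi$ to $F\phi$. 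Hence I must show that $\phi\mapsto F\phi$ is a bijection between the two corresponding sets of maps.

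The key structural fact is that $\Rel(F)$ is a local order-embedding: for relations $\phi,\phi'\colon X\to Y$ one has $\phi\le\phi'$ if and only if $F\phi\le F\phi'$. Indeed $F\phi\le F\phi'$ provides a morphism $F\phi\to F\phi'$ over $F(X\times Y)=FX\times FY$, which by fullness and faithfulness is $F$ of a factorization $\phi\to\phi'$ over $X\times Y$. Injectivity of $\phi\mapsto F\phi$ is then immediate, and well-definedness (that $F\phi$ again satisfies the three map-conditions) follows since $\Rel(F)$ preserves composition, opposites, and order. The main obstacle is surjectivity, and this is exactly where closure under subobjects enters: given a map $\psi\colon(FX,FR)\to(FY,FS)$ in $Q(\Rel(\cd))$, the relation $\psi$ is a subobject of $FX\times FY=F(X\times Y)$, hence of the form $F\phi$ for a unique relation $\phi\colon X\to Y$ in \cx. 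The identities and inequalities $FS\,F\phi\,FR=F\phi$, $F\phi\,(F\phi)^{\circ}\le FS$, and $FR\le(F\phi)^{\circ}F\phi$ then descend, via the order-embedding together with preservation of composition and opposites, to $S\phi R=\phi$, $\phi\phi^{\circ}\le S$, and $R\le\phi^{\circ}\phi$. Thus $\phi$ is a morphism of $\cx\exreg$ with $F\exreg\phi=\psi$, completing the bijection and hence the full faithfulness of $G$.
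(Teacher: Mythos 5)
Your proof is correct and takes essentially the same approach as the paper's: part (a) is the same extremal-epimorphism argument using closure of the image of $F$ under subobjects, and part (b) follows the paper's route of showing that the induced homomorphism $\Rel(\cx)\to\Rel(\cd)$ is fully faithful (again via closure under subobjects), then splitting the equivalence-relation idempotents and taking maps to get full faithfulness of $F\exreg$, hence of $G\cong I'^{-1}\circ F\exreg$. Your write-up simply makes explicit the local order-embedding and surjectivity-on-relations details that the paper's proof leaves implicit.
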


\proof
(a) We must show that $F$ preserves regular epimorphisms. Suppose then that $p\colon X\to Y$ is a regular epimorphism in \cx, and that $Fp\colon FX\to FY$ factorizes through some subobject $D\le FY$; since the image of $F$ is closed under subobjects, this gives a factorization of $p$ through a subobject which is mapped to $D$. Since $p$ is a regular epimorphism, this subobject must be trivial, whence so too is $D$. 

(b) The regular functor $F\colon\cx\to\cd$ induces a homomorphism of bicategories $\Rel(\cx)\to\Rel(\cd)$, which is fully faithful since the image of $F$ is closed under subobjects. Splitting the equivalence relations gives a fully faithful homomorphism $Q(\Rel(\cx))\to Q(\Rel(\cd))$, and now taking maps gives a fully faithful regular functor $\cx\exreg\to\cd\exreg$; but this is just $F\exreg$. 

Since \cd is exact, the unit map $I'\colon\cd\to\cd\exreg$ is an equivalence, and the composite of its inverse with $F\exreg$ is isomorphic to $G$, which is therefore fully faithful.
\endproof

\begin{proposition}\label{prop:exreg-characterization}
 Let \cx be a regular category and \cd an exact category, with $F\colon\cx\to\cd$ a regular functor. The induced regular functor $G\colon \cx\exreg\to\cd$ is an equivalence if and only if $F$ is fully faithful, the image of $F$ is closed under subobjects, and for every object $D\in\cd$ there is a regular epimorphism $q\colon FX\to D$.
\end{proposition}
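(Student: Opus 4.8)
The plan is to prove the two implications separately, and to lean on the preceding Proposition so as to isolate the genuine content of each direction. For the ``if'' direction I would first note that a regular functor preserves finite limits, so the hypotheses that $F$ is fully faithful with image closed under subobjects are exactly those of the preceding Proposition; its part~(b) then already gives that $G$ is fully faithful, and it remains only to prove $G$ essentially surjective. For this I would take an arbitrary $D\in\cd$, choose a regular epimorphism $q\colon FX\to D$ supplied by the third hypothesis, and let $R$ be its kernel pair, an equivalence relation on $FX$ in \cd. Since $F$ preserves products, $R$ is a subobject of $F(X\times X)$, so closure of the image of $F$ under subobjects lets me write $R\cong F(S)$ for a relation $S\le X\times X$ in \cx. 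Because $F$ is fully faithful and preserves finite limits it reflects the subobject order, and because it is regular it induces a homomorphism $\Rel(\cx)\to\Rel(\cd)$ preserving opposites and composition of relations; hence the relations $1_{FX}\le R$, $R=R^\circ$ and $RR\le R$ transport back to $1_X\le S$, $S=S^\circ$ and $SS\le S$, so $S$ is an equivalence relation in \cx and therefore an object of $\cx\exreg$. As \cd is exact, $q$ exhibits $D$ as the coequalizer of $R$, which is precisely the value $G(S)$; thus $D\cong G(S)$, $G$ is essentially surjective, and hence an equivalence.

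For the ``only if'' direction I would assume $G$ is an equivalence and read off the three conditions. Using $F\cong GI$ together with the fact (recalled in the excerpt) that the unit $I\colon\cx\to\cx\exreg$ is fully faithful, $F$ is fully faithful as a composite of fully faithful functors. Next, since the image of $I$ is closed under subobjects in $\cx\exreg$ and $G$ is an equivalence—so it is essentially surjective and preserves and reflects the subobject order—any subobject of $FX'\cong G(I(X'))$ reflects to a subobject of $I(X')$, which lies in the image of $I$, and applying $G$ shows it lies in the image of $F$; hence the image of $F$ is closed under subobjects in \cd. Finally, every object $\xi$ of $\cx\exreg$ admits a regular epimorphism $I(X)\to\xi$ from an object in the image of $I$ (this is how the formally adjoined quotients of equivalence relations sit in the ex/reg completion); given $D\in\cd$, essential surjectivity of $G$ produces $\xi$ with $G(\xi)\cong D$, and applying the regular functor $G$ to such a quotient yields a regular epimorphism $FX\cong G(I(X))\to G(\xi)\cong D$.

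The step I expect to be the main obstacle is the reconstruction, in the ``if'' direction, of the equivalence relation $S$ on $X$ inside \cx from the kernel pair $R$ in \cd. The delicate point is not merely that $R$ descends to \emph{some} subobject $S\le X\times X$, but that $S$ is genuinely reflexive, symmetric, and transitive \emph{in} \cx rather than only after applying $F$. This is exactly where the three hypotheses must cooperate: closure under subobjects produces $S$, while full faithfulness and finite-limit preservation give reflection of the subobject order, and regularity gives preservation of relational composition, so that the defining inequalities for an equivalence relation hold already in \cx. Once this is secured, the identification $D\cong G(S)$ via exactness of \cd is routine, and both directions close up.
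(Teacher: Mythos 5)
Your proof is correct and follows essentially the same route as the paper's: full faithfulness of $G$ comes from part (b) of the preceding Proposition, essential surjectivity comes from descending the kernel pair of a cover $q\colon FX\to D$ to an equivalence relation on $X$ in \cx and identifying its formal quotient with $D$, and the converse transfers the three conditions from the unit $I$ along the equivalence $G$. The only cosmetic differences are that you spell out why the descended relation is genuinely an equivalence relation in \cx (the paper leaves this implicit) and that you cite the recalled fact that the image of $I$ is closed under subobjects, where the paper re-derives it from the image factorization of the composite $mq$.
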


\proof
If $G$ is an equivalence, then we should prove that $I\colon\cx\to\cx\exreg$ satisfies the conditions in the proposition. 
If $D\in\cx\exreg$, the existence of a regular epimophism $q\colon IX\to D$ follows from the construction given above. If $m\colon D\to IY$ is a subobject of an object $IY$ in the image of $I$, then the composite $mq\colon IX\to IY$ is in the image of $I$. Since $I$ is a regular functor, it must preserve the kernel pair of $mq$, and the quotient of this kernel pair, and this implies that the subobject $D$ is itself in \cx.

Suppose conversely that $F\colon\cx\to\cd$ satisfies the conditions in the proposition. Then $G$ is fully faithful by the previous proposition, and so it will suffice to show that $G$ is essentially surjective on objects. Given $D\in\cd$, let $q\colon FX\to D$ be a regular epimorphism; then the  kernel pair of $q$ is an equivalence relation in $\Rel(\cx)$ on $X$, and so an object of $Q(\Rel(\cx))$, and its image in $Q(\Rel(\cd))$ is isomorphic to $D$. Thus $F\exreg$ is an equivalence of categories, and hence so too is $G$. 
\endproof

\section{Semi-localizations of exact Mal'tsev categories}\label{Mal'tsev}
An abstract characterization of the categories which occur as a semi-localization of an exact category was first discovered by S. Mantovani in \cite{Mantovani-semilocalizations}. 
In this section we begin by giving a (slightly different) proof of Mantovani's result (Theorem \ref{thm:exact}), mainly to make this article more self-contained. We then specialize it to the case of Mal'tsev categories (Theorem \ref{thm:Maltsev}), getting an abstract characterization of semi-localizations of exact Mal'tsev categories. We then observe that topological Mal'tsev algebras form a category which is a semi-localization of an exact Mal'tsev category.

\begin{proposition}\label{semi-localizationimpliesstability}
Let \cx be a semi-localization of an exact category \cc. Then \cx has stable coequalizers of equivalence relations (and so in particular \cx is regular).  
\end{proposition}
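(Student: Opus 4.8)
The plan is to realize the coequalizer of an equivalence relation in $\cx$ as the reflection of the corresponding coequalizer in $\cc$, and then to analyze its pullbacks by splitting each pullback into one piece controlled by the semi-left-exactness of $L$ and one piece controlled by the regularity of $\cc$. First I would record the standard facts about the reflection $L\dashv U$: since $\cc$ is finitely complete and $\cx$ is reflective, $\cx$ is closed in $\cc$ under limits, so $\cx$ is finitely complete, $U$ preserves finite limits, and colimits in $\cx$ are obtained by applying $L$ to the corresponding colimits in $\cc$. Given an equivalence relation $R\rightrightarrows X$ in $\cx$, the functor $U$ carries it to an equivalence relation in $\cc$, which is effective because $\cc$ is exact; let $q\colon X\to C$ be its coequalizer in $\cc$, a regular epimorphism with kernel pair $R$. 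Reflecting, the coequalizer of $R\rightrightarrows X$ in $\cx$ is the morphism $c\colon X\to Q$ obtained by corestricting $\eta_C\circ q\colon X\to ULC$, where $Q:=LC$.

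Next I would prove pullback-stability. Fix an arbitrary $h\colon Y\to Q$ in $\cx$ and form the pullback $P$ of $c$ along $h$; since $U$ preserves pullbacks and $\cx$ is closed under limits, $P\in\cx$ and $UP$ is computed in $\cc$. Because $Uc=\eta_C\circ q$, I would split this pullback into two: first the pullback of $\eta_C$ along $Uh$, producing an object $P'$ with projections $k\colon P'\to C$ and $h'\colon P'\to UY$, and then the pullback of $q$ along $k$, producing $\bar q\colon P\to P'$. The first square has exactly the shape occurring in the definition of semi-left-exactness, its bottom edge being the unit $\eta_C$ and its right edge being $Uh$ with $h\colon Y\to LC$ in $\cx$, so $L$ preserves it; as $L\eta_C$ is invertible, this forces $Lh'$ to be invertible and $LP'\cong Y$. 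The second square is a pullback of the regular epimorphism $q$, so $\bar q$ is a regular epimorphism in $\cc$ by regularity of $\cc$.

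Finally I would assemble these facts. The composite $h'\circ\bar q$ equals $Ug$, where $g\colon P\to Y$ is the pullback projection in $\cx$. Since $L$ preserves coequalizers it preserves regular epimorphisms, so $L\bar q$ is a regular epimorphism in $\cx$; composing it with the isomorphism $Lh'$ shows $L(Ug)=Lh'\circ L\bar q$ is a regular epimorphism. As $U$ is fully faithful the counit $\epsilon\colon LU\to 1_\cx$ is invertible, so $g$ is isomorphic as a morphism to $LUg=L(h'\circ\bar q)$, whence $g$ is a regular epimorphism in $\cx$. A direct computation then identifies the pulled-back equivalence relation $h^*R$ with the kernel pair of $\bar q$ in $\cc$, an object that lies in $\cx$ even though $P'$ need not, so that $g$ is in fact the coequalizer of $h^*R$ in $\cx$. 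This establishes that coequalizers of equivalence relations in $\cx$ exist and are pullback-stable; regularity then follows, since kernel pairs are equivalence relations, so they have pullback-stable coequalizers and every regular epimorphism, being the coequalizer of its kernel pair, is pullback-stable.

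I expect the main obstacle to be the bookkeeping in the middle step: one must factor the pullback precisely so that the $\eta_C$-part is a legitimate semi-left-exact square, and then extract from the invertibility of $L\eta_C$ the two facts that $Lh'$ is invertible and that $LP'\cong Y$. The other delicate point is the identification of $h^*R$ with the kernel pair of $\bar q$, a computation carried out in $\cc$ but yielding an object of $\cx$, which is what upgrades the conclusion from ``$g$ is a pullback-stable regular epimorphism'' to ``$g$ is the coequalizer of the pulled-back equivalence relation''.
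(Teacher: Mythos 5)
Your proof is correct and follows essentially the same route as the paper's: both realize the coequalizer in \cx as the unit $\eta_C$ composed with the coequalizer $q$ in \cc, split the pullback along $h$ into the semi-left-exact square (pullback along $\eta_C$) and a pullback along $q$ handled inside \cc, and conclude by applying the coequalizer-preserving reflector $L$ together with the invertibility of $L\eta_C$. The only cosmetic difference is that where the paper invokes stability of coequalizers of equivalence relations in the exact category \cc outright, you re-derive it from pullback-stability of regular epimorphisms plus the identification of the kernel pair of $\bar q$ with $h^*R$.
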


\proof
Let us denote by $U \colon\cx\to\cc$ the inclusion functor, and by $L \colon \cc \rightarrow \cx$ its left adjoint. The functor $U$ preserves finite limits, and so equivalence relations in \cx are also equivalence relations in \cc. But \cc is exact, so in particular has coequalizers of equivalence relations, while the reflector $L$, like any left adjoint, preserves coequalizers; thus \cx does have coequalizers of equivalence relations. 

To see that they are stable, consider a diagram
$$\xymatrix{
US \ar@<1ex>[r] \ar@<-1ex>[r] \ar[d] & 
UX' \ar[r] \ar[d] & Q' \ar[r] \ar[d] & UY \ar[d]^{Uf} \\
UR \ar@<1ex>[r] \ar@<-1ex>[r] & 
UX \ar[r] & Q \ar[r]_-{\eta_Q} & ULQ }$$
where $R$ is an equivalence relation in \cx on the object $X$ and $Q$ is the coequalizer in \cc, while $\eta_Q \colon Q\to ULQ$ is the unit of the reflection. Thus the composite $UX\to ULQ$ is the coequalizer in \cx of the equivalence relation $R$. Let $f\colon Y\to LQ$ be a morphism in \cx, then form the rest of the diagram using pullbacks in \cc of $Uf$. By stability of coequalizers of equivalence relations in \cc, $Q'$ is the coequalizer in \cc of the equivalence relation $S$. By semi-left exactness the reflector $L$ inverts the map $Q'\to UY$, which means in turn that $X'\to Y$ is the coequalizer of $S$. This proves the stability of coequalizers of equivalence relations in \cx.
\endproof


\begin{proposition}\label{stablecoequalizersimpliessemileftexact}
If \cx has stable coequalizers of equivalence relations, then the inclusion $U\colon \cx\to\cx\exreg$ has a semi-left-exact left adjoint $L \colon \cx\exreg  \rightarrow \cx$, and the units are regular epimorphisms.
\end{proposition}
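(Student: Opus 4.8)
The plan is to construct the left adjoint $L$ directly from the relational description of $\cx\exreg$, read off the unit, and then verify the two asserted properties in turn; the stability hypothesis will be used in an essential way only in the proof of semi-left-exactness. Throughout I use that, by hypothesis (together with finite limits), $\cx$ is regular, so that $\cx\exreg$ is defined, and that $Q(\Rel(\cx))$ is the bicategory of relations of the exact category $\cx\exreg$.

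Recall that an object of $\cx\exreg$ may be taken to be a pair $(A,E)$ with $E$ an equivalence relation on $A\in\cx$, with $U=I$ sending $A$ to $(A,\Delta_A)$. I would define $L$ on objects by $L(A,E)=A/E$, the coequalizer in $\cx$ of the equivalence relation $E$, which exists by hypothesis; write $q_E\colon A\to A/E$. To exhibit this as a left adjoint I would check that a morphism $(A,E)\to (X,\Delta_X)=UX$ of $\cx\exreg$ is the same as a relation $F\colon A\to X$ with $FF^\circ\le\Delta_X$, $E\le F^\circ F$ and $FE=F$; since $E$ is reflexive such an $F$ is total, hence the graph of a morphism $A\to X$ coequalizing $E$, which by the universal property of the coequalizer corresponds to a unique morphism $A/E\to X$. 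This yields a natural bijection $\cx\exreg((A,E),UX)\cong\cx(A/E,X)$, so $U$ has the left adjoint $L$, and inspection of the bijection shows that the unit $\eta_{(A,E)}\colon(A,E)\to U(A/E)$ is the map represented by the graph $\langle q_E\rangle$.

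For the units I would work in $\cx\exreg$, applying Lemma \ref{propertiesofrelations} in its bicategory of relations $Q(\Rel(\cx))$: a map $p$ is a regular epimorphism precisely when $pp^\circ=1$. Since $\eta_{(A,E)}=\langle q_E\rangle$ we compute $\eta_{(A,E)}\eta_{(A,E)}^\circ=q_Eq_E^\circ$, and this is the identity because $q_E$, being a coequalizer, is a regular epimorphism in $\cx$. Hence every unit is a regular epimorphism.

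The substance is semi-left-exactness. Given a pullback square of the stated form with $Q=(A,E)$, $LQ=A/E=:C$, $f\colon X\to C$ in $\cx$ and $\eta_Q=\langle q_E\rangle$, I would first identify the pullback object $P$. Put $B=A\times_C X$, the pullback in $\cx$ of $q_E$ along $f$; as $I$ preserves finite limits, $IB=IA\times_{IC}IX$ maps onto $P$ by the regular epimorphism $e\colon IB\to P$ obtained by pulling back the canonical regular epimorphism $r_E\colon IA\to(A,E)$, whose kernel pair is $IE$. Computing $\ker e$ as the intersection of the kernel pairs of the two legs $r_E\cdot I(pr_A)$ and $I(pr_X)$ of the mono $P\rightarrowtail(A,E)\times IX$, and using that $I$ preserves kernel pairs, one finds $\ker e=IE_B$, where $E_B=E\times_C X$ is the pullback of $E$ along $f$; since $\cx\exreg$ is exact, this gives $P\cong(B,E_B)$ and hence $LP=B/E_B$. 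Now the hypothesis enters: because $C$ is the coequalizer of $E$, and $pr_X\colon B\to X$ and $E_B$ are the pullbacks of $q_E$ and of $E$ along $f$, stability of coequalizers of equivalence relations says exactly that $pr_X$ is the coequalizer of $E_B$, so that $LP=B/E_B\cong X$, the isomorphism being $L\pi_2$. Finally $L\eta_Q$ is invertible by the triangle identity and full faithfulness of $U$, so the image under $L$ of the square has invertible bottom edge and invertible comparison $L\pi_2$, and is therefore a pullback; this is precisely the claim that $L$ preserves it.

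The main obstacle is the computation of the pullback $P$ inside the exact completion and the recognition that its equivalence relation $E_B$ is the pullback of $E$ along $f$. Once this is in place, stability of coequalizers of equivalence relations in $\cx$ is exactly what is needed — and, tellingly, all that is needed — to force $LP\cong X$; everything else is formal, including the two applications of the relational calculus above.
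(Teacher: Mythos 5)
Your proof is correct, and its heart---the semi-left-exactness argument---has the same skeleton as the paper's: exhibit the pullback as the quotient of an equivalence relation pulled back from \cx, use that $L$ (being a left adjoint) preserves coequalizers, and let stability of coequalizers of equivalence relations in \cx supply the final isomorphism, so that a square with invertible top and bottom edges results. The difference lies in how the supporting steps are carried out. The paper obtains $L$ abstractly as a left Kan extension of $1_{\cx}$ along $U$, deduces that the units are regular epimorphisms from the closure of \cx under subobjects in $\cx\exreg$, and identifies the pulled-back object as a quotient by citing pullback-stability of coequalizers of equivalence relations in the exact category $\cx\exreg$. You instead construct the adjunction by an explicit hom-set bijection in $Q(\Rel(\cx))$, prove the units regular epimorphic via Lemma~\ref{propertiesofrelations}(b) (the identity $\eta\eta^\circ=1$), and identify $P\cong(B,E_B)$ by covering it with $IB$ and computing the kernel pair of the covering map---which amounts to proving, rather than invoking, the stability fact the paper uses. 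Your route buys self-containedness: every step is verified directly from the relational construction of $\cx\exreg$, including the facts that each object of $\cx\exreg$ is a quotient of one from \cx and that such quotients pull back to quotients. The paper's route buys brevity, and makes clear that the argument depends only on the universal property of $\cx\exreg$ and on \cx being closed in it under subobjects, not on any particular construction. One small point worth a line in your write-up: as a morphism of $Q(\Rel(\cx))$, composition inserts the idempotent $E$, so the computation $\eta\eta^\circ=q_Eq_E^\circ$ implicitly uses $q_E E=q_E$, which holds because $E\le q_E^\circ q_E$; this is harmless but should be said.
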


\proof
If \cx has coequalizers of equivalence relations then certainly $1\colon\cx\to\cx$ satisfies the conditions for a left Kan extension along $U$ to exist. The resulting functor $L\colon\cx\exreg \rightarrow \cx$ is left adjoint to $U$. 
The components of the adjunction will be regular epimorphisms because \cx is closed in $\cx\exreg$ under subobjects. We need to show that stability of the coequalizers of equivalence relations implies semi-left exactness of the reflector $L \colon\cx\exreg \rightarrow \cx$.

We need to show that a pullback square in $\cx\exreg$ as in the right square of the diagram 
$$\xymatrix{
US \ar@<1ex>[r] \ar@<-1ex>[r] \ar[d] & 
UX' \ar[r] \ar[d] & Q' \ar[r] \ar[d] & UY \ar[d]^{Uf} \\
UR \ar@<1ex>[r] \ar@<-1ex>[r] & 
UX \ar[r] & Q \ar[r]_-{\eta_Q} & ULQ }$$
is preserved by $L$. By the construction of $\cx\exreg$, the object $Q$ appears as the coequalizer of an equivalence relation $R$ in \cx on an object $X$. Complete the rest of the diagram by forming pullbacks. 

Since coequalizers of equivalence relations in $\cx\exreg$ are stable under pullbacks, $Q'$ is the coequalizer in $\cx\exreg$ of the equivalence relation $S$ on $X'$. Since $L$ preserves coequalizers, $LQ'$ is the coequalizer of $S$,
while by stability of coequalizers of equivalence relations $Y$ is also this coequalizer. Thus the comparison $LQ'\to Y$, as in 
$$\xymatrix{
S \ar@<1ex>[r] \ar@<-1ex>[r] \ar[d] & 
X' \ar[r] \ar[d] & LQ' \ar[r] \ar[d] & Y \ar[d] \\
R \ar@<1ex>[r] \ar@<-1ex>[r] & 
X \ar[r] & LQ \ar@{=}[r] & LQ }$$
is invertible, and the reflector $L \colon\cx\exreg \rightarrow \cx$ is indeed semi-left exact. \endproof
We are now ready to prove the following
\begin{theorem}\label{thm:exact}\cite{Mantovani-semilocalizations}
  For a category \cx, the following conditions are equivalent:
  \begin{enumerate}[(a)]
    \item \cx has finite limits and stable coequalizers of equivalence relations (and so in particular is regular);
  \item \cx is a semi-localization of an exact category \cc;
\item \cx is a semi-localization of an exact category \cc, and the units of the reflection are regular epimorphisms;
\item \cx is regular, and is a semi-localization of its exact completion $\cx\exreg$ as a regular category. 
  \end{enumerate}
\end{theorem}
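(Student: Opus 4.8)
The plan is to prove the cycle of implications $(a)\Rightarrow(d)\Rightarrow(c)\Rightarrow(b)\Rightarrow(a)$, since each link can be handled either by a proposition already established in the excerpt or by an easy direct argument.

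First I would establish $(a)\Rightarrow(d)$. Assuming \cx has finite limits and stable coequalizers of equivalence relations, Proposition \ref{semi-localizationimpliesstability} already notes such an \cx is regular, so its exact completion $\cx\exreg$ exists. By Proposition \ref{stablecoequalizersimpliessemileftexact}, the inclusion $U\colon\cx\to\cx\exreg$ has a semi-left-exact left adjoint $L$, which is precisely the statement that \cx is a semi-localization of $\cx\exreg$. Since $\cx\exreg$ is exact by construction, this is exactly condition $(d)$; the regularity of \cx is part of the hypothesis. So this implication is essentially immediate from the two propositions.

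Next, $(d)\Rightarrow(c)$ is a weakening: condition $(d)$ exhibits \cx as a semi-localization of the \emph{specific} exact category $\cx\exreg$, whereas $(c)$ only asks for \emph{some} exact category \cc. The one extra thing $(c)$ demands is that the units be regular epimorphisms, but Proposition \ref{stablecoequalizersimpliessemileftexact} already guarantees this for the reflection into $\cx\exreg$, since \cx is closed in $\cx\exreg$ under subobjects. Then $(c)\Rightarrow(b)$ is a pure forgetting of the hypothesis on the units, requiring no argument at all.

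Finally $(b)\Rightarrow(a)$ closes the cycle. If \cx is a semi-localization of an exact category \cc, then the inclusion $U\colon\cx\to\cc$ is a right adjoint, hence preserves limits, and since \cc has finite limits so does \cx. Proposition \ref{semi-localizationimpliesstability} then gives exactly that \cx has stable coequalizers of equivalence relations (and is therefore regular), which is condition $(a)$. I expect the implications $(a)\Rightarrow(d)$ and $(b)\Rightarrow(a)$ to be the substantive steps, but both have already been isolated as the two preceding propositions, so the main ``obstacle'' here is really bookkeeping: checking that the four conditions line up with the hypotheses and conclusions of Propositions \ref{semi-localizationimpliesstability} and \ref{stablecoequalizersimpliessemileftexact} with no gaps. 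The only mildly delicate point is verifying in $(d)\Rightarrow(c)$ that the chosen exact category may be taken to be $\cx\exreg$ and that its units are regular epimorphisms, but this is handed to us directly by Proposition \ref{stablecoequalizersimpliessemileftexact}.
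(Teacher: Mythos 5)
Your proposal is correct and is essentially the paper's own proof: the same cycle of implications, with $(a)\Rightarrow(d)$ given by Proposition \ref{stablecoequalizersimpliessemileftexact}, $(b)\Rightarrow(a)$ by Proposition \ref{semi-localizationimpliesstability}, $(d)\Rightarrow(c)$ by the fact that \cx is closed under subobjects in $\cx\exreg$ (hence the units are regular epimorphisms), and $(c)\Rightarrow(b)$ trivial. No gaps; the bookkeeping lines up exactly as you describe.
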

\begin{proof}
Since \cx is closed in $\cx\exreg$ under subobjects, if it is reflective then the units of the reflection are regular epimorphisms. Thus (d) implies (c), and clearly (c) implies (b). The implication (b) implies (a) is given by Proposition \ref{semi-localizationimpliesstability}, and (a) implies (d) by Proposition \ref{stablecoequalizersimpliessemileftexact}.
\end{proof}
\begin{remark}\label{remarkDescent}
Any category \cx satisfying the equivalent conditions of Theorem \ref{thm:exact} has the property that regular epimorphisms are effective for descent. Indeed, regular epimorphisms are always descent morphisms in any regular category (see \cite{JST}, for example). To see that regular epimorphisms are effective for descent in \cx, knowing that they are so in $\cx\exreg$, it suffices to check the following: given any pullback in $\cx\exreg$ along a regular epimorphism $p\colon E \rightarrow B$ in $\cx$
\begin{equation}\label{pullback}
\xymatrix{
E \times_B A \ar[r]^-{\pi_2} \ar[d]_-{\pi_1} & 
A \ar[d]^{f}   \\
E \ar[r]_{p}  & 
B  }
\end{equation}
 the object $A$ belongs to \cx whenever $E\times_B A$ is in \cx (see Corollary $2.7$ in \cite{JT-descent}). For this, it suffices to show that the $A$-component $\eta_A$ of the unit of the adjunction is an isomorphism, and this follows easily from the semi-left exactness of the reflector $L \colon \cx\exreg  \rightarrow \cx$, together with the fact that pulling back along a regular epimorphism in $\cx\exreg$ reflects isomorphisms.
 
 A natural question is then the following: is a regular category with the property that regular epimorphisms are effective descent morphisms necessarily a semi-localization of an exact category? The answer to this question is negative, and an explicit counter-example will be given in Remark \ref{Descent}. 
 \end{remark}

\begin{remark}
  We have characterized, for a category \cx, when there exists some exact category \cc of which \cx is a semi-localization. On the other hand, we cannot hope to recover \cc from \cx, as the following example  shows. Let \cc be the category $\Ab^\two$ whose objects are homomorphisms of abelian groups, and whose morphisms are commutative squares in \Ab. The (image of) the fully faithful functor $\Ab\to\Ab^\two$ sending an abelian group $A$ to the identity morphism $A\to A$ has a left adjoint $\Ab^\two\to\Ab$ sending a morphism to its codomain, and this left adjoint is in fact continuous. Thus \Ab is not just a semi-localization, but a localization of $\Ab^\two$. On  the other hand, \Ab is already exact, so $\Ab\exreg$ is just \Ab itself.  The same example is relevant to all the other characterizations of semi-localizations given in the paper. Remark that there is another fully faithful functor $\Ab\to\Ab^\two$, sending an abelian group $A$ to the morphism $0 \to A$, admitting as left adjoint the cokernel functor $\mathsf{coker} \colon \Ab^\two\to \Ab$, which is semi-left-exact (see Section $1$ in \cite{EG-Homology}, for instance). Accordingly, $\Ab$ can be seen as a semi-localization of $\Ab^\two$ in two different ways.
\end{remark}

In the following sections, we prove variants of Theorem \ref{thm:exact} involving semi-localizations of exact categories with extra properties of some type, such as being protomodular, semi-abelian, or abelian. We are now going to give a prototype for these theorems, involving the structure of Mal'tsev category \cite{CLP}.  Recall that a finitely complete category \cc is a Mal'tsev category if any reflexive relation in \cc is an equivalence relation; or, equivalently, if any reflexive relation in \cc is symmetric.

We are now ready to prove the following:

\begin{theorem}\label{thm:Maltsev}
  For a category \cx, the following conditions are equivalent:
  \begin{enumerate}[(a)]
    \item \cx is a regular Mal'tsev category, and has stable coequalizers of equivalence relations;
  \item \cx is a semi-localization of an exact Mal'tsev category \cc;
\item \cx is a semi-localization of an exact Mal'tsev category \cc, and the units of the reflection are regular epimorphisms;
\item \cx is a regular Mal'tsev category, and is a semi-localization of its exact completion $\cx\exreg$ as a regular category.
  \end{enumerate}
\endproof
\end{theorem}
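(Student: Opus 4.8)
The plan is to deduce the theorem from the already-established Theorem~\ref{thm:exact}, carrying the Mal'tsev condition along the cycle $(a)\Rightarrow(d)\Rightarrow(c)\Rightarrow(b)\Rightarrow(a)$. The implication $(c)\Rightarrow(b)$ merely forgets the condition on the units, while $(a)\Rightarrow(d)$ is immediate from the corresponding implication of Theorem~\ref{thm:exact}, since the Mal'tsev hypothesis of (a) is retained verbatim in (d). The genuinely new content is therefore twofold: (i) in $(b)\Rightarrow(a)$ the Mal'tsev property must be shown to descend from \cc to \cx; and (ii) in $(d)\Rightarrow(c)$ one must know that the exact completion $\cx\exreg$ is itself Mal'tsev, so that it qualifies as the \emph{exact Mal'tsev} category \cc demanded by (c), the units being regular epimorphisms because \cx is closed in $\cx\exreg$ under subobjects.

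For (i), Theorem~\ref{thm:exact} already yields that \cx has finite limits and stable coequalizers of equivalence relations, so only the Mal'tsev property remains. Let $R\rightarrowtail X\times X$ be a reflexive relation in \cx and let $U\colon\cx\to\cc$ be the fully faithful, finite-limit-preserving inclusion. Then $UR\rightarrowtail UX\times UX$ is a reflexive relation in \cc, hence symmetric since \cc is Mal'tsev; that is, $UR\le(UR)^\circ$. As $U$ preserves products, the twist on $UX\times UX$ is the image under $U$ of the twist on $X\times X$, so $(UR)^\circ=U(R^\circ)$ and thus $U(R)\le U(R^\circ)$ as subobjects of $U(X\times X)$. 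Full faithfulness of $U$ forces it to reflect the subobject ordering, whence $R\le R^\circ$ and $R$ is symmetric; so \cx is a Mal'tsev category.

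The crux is (ii). Here I would invoke the relational characterization of the Mal'tsev condition \cite{CLP}: a regular category is Mal'tsev exactly when every relation $R$ is difunctional, i.e.\ $RR^\circ R=R$ (the inequality $R\le RR^\circ R$ holding automatically). This is an equational condition on the morphisms of the bicategory $\Rel(\cx)$. Recall from Section~\ref{Exact-regular} that $\cx\exreg$ is the category of maps in $Q(\Rel(\cx))$, obtained by splitting the equivalence relations of $\Rel(\cx)$ as idempotents, so that $\Rel(\cx\exreg)\simeq Q(\Rel(\cx))$: a relation in $\cx\exreg$ is nothing but a relation $R\colon X\to Y$ of $\Rel(\cx)$ absorbed on each side by the relevant split idempotent, with opposites and composites computed as in $\Rel(\cx)$. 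Hence the identity $RR^\circ R=R$, valid for every relation of $\Rel(\cx)$ by hypothesis, holds a fortiori for every relation of $\cx\exreg$; every such relation is difunctional, and $\cx\exreg$, being exact by construction, is therefore an exact Mal'tsev category. I expect (ii) to be the conceptual heart of the proof: the difficulty is entirely in recognizing that difunctionality is the right $\Rel$-invariant reformulation of the Mal'tsev axiom, after which its stability under splitting idempotents is essentially automatic, the only point to check being that opposites and composites in $Q(\Rel(\cx))$ agree with those in $\Rel(\cx)$, which they do by construction.
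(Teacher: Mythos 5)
Your proof is correct, and its overall skeleton is the paper's: everything is reduced to Theorem~\ref{thm:exact}, leaving (i) that a semi-localization of a Mal'tsev category is Mal'tsev, and (ii) that $\cx\exreg$ is Mal'tsev whenever \cx is a regular Mal'tsev category. Step (i) is also the paper's argument (a full reflective subcategory is closed under finite limits, hence inherits the Mal'tsev property); your spelled-out version with $U$ reflecting the subobject order is just a more detailed rendering of the same fact. Where you genuinely diverge is step (ii). The paper proves it by a covering argument: given a reflexive relation $R$ on $A\in\cx\exreg$, choose a regular epimorphism $p\colon X\to A$ with $X\in\cx$, note that $S=p^\circ Rp$ is a reflexive relation lying in \cx (closure under subobjects), hence symmetric since \cx is Mal'tsev, and then recover symmetry of $R$ from $R=pp^\circ Rpp^\circ=pSp^\circ\le pS^\circ p^\circ=R^\circ$, using Lemma~\ref{propertiesofrelations}. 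You instead invoke the difunctionality characterization of Mal'tsev regular categories from \cite{CLP} and observe that $RR^\circ R=R$ is an equation in the bicategory of relations, hence survives the idempotent-splitting construction identifying $\Rel(\cx\exreg)$ with $Q(\Rel(\cx))$. Both arguments are valid. Yours is arguably more conceptual: it isolates the principle that any property expressible equationally in $\Rel(\cx)$ passes to $\cx\exreg$, since composites, involution and the local order in $Q(\Rel(\cx))$ are computed as in $\Rel(\cx)$. Its cost is that it leans on two imported facts: the difunctionality theorem of \cite{CLP}, and the assertion from Section~\ref{Exact-regular} that $Q(\Rel(\cx))$ ``is once again a bicategory of relations'' (so that its involution really is the opposite-relation operation of $\cx\exreg$). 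The paper's covering argument is more self-contained, and it is deliberately chosen as a template: the same cover-and-restrict scheme reappears almost verbatim in Proposition~\ref{completionprotomodular} for protomodularity, where the relational characterization (Theorem~\ref{caracterisation}) involves existential data (the map $f$ exhibiting left pseudosymmetry) and a purely equational transfer of the kind you use would not apply so automatically.
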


\proof
Given Theorem~\ref{thm:exact}, we need only show that (b) implies that \cx is a Mal'tsev category, and that if \cx is a Mal'tsev category then so is $\cx\exreg$. The first of these is straightforward: any full subcategory of a Mal'tsev category closed under finite limits is a Mal'tsev category, so in particular this is the case for any full reflective subcategory of a Mal'tsev category. We record the second as the following proposition, whose proof therefore completes the proof of this theorem.
\endproof

\begin{proposition}
  If \cx is a regular Mal'tsev category, then $\cx\exreg$ is also a Mal'tsev category.
\end{proposition}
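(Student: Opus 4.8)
The plan is to use the characterization of regular Mal'tsev categories in terms of relations, together with the fact that $\cx\exreg$ is constructed entirely from the bicategory of relations of $\cx$. Recall from \cite{CLP} that a regular category is a Mal'tsev category if and only if every relation $R$ in it is \emph{difunctional}, meaning that $RR^\circ R\le R$ (the reverse inequality $R\le RR^\circ R$ holds in any bicategory of relations). Since $\cx\exreg$ is exact, and in particular regular and finitely complete, it will suffice to show that every relation in $\cx\exreg$ is difunctional, given that this holds in $\cx$.

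First I would recall the relevant features of the construction. The relations of $\cx\exreg$ are precisely the $1$-cells of the bicategory of relations $Q(\Rel(\cx))$, which is obtained from $\Rel(\cx)$ by splitting the equivalence relations (these being symmetric idempotents). Concretely, an object of $Q(\Rel(\cx))$ is a pair $(X,E)$ with $E$ an equivalence relation on an object $X$ of $\cx$, and a $1$-cell $R\colon (X,E)\to (Y,F)$ is a relation $R\colon X\to Y$ in $\Rel(\cx)$ satisfying $R=FRE$. Crucially, composition of such $1$-cells, the passage to the opposite, and the local order $\le$ are all computed exactly as in $\Rel(\cx)$: the composite of $R\colon(X,E)\to(Y,F)$ and $S\colon(Y,F)\to(Z,G)$ is the composite $SR$ formed in $\Rel(\cx)$, the opposite of $R=FRE$ is $R^\circ=ER^\circ F$ (using that $E$ and $F$ are symmetric), and $\le$ is inherited.

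With this in hand the argument is immediate. Let $R\colon(X,E)\to(Y,F)$ be any relation in $\cx\exreg$, regarded as a $1$-cell of $Q(\Rel(\cx))$ and hence as a relation $R\colon X\to Y$ in $\Rel(\cx)$. Because composition and opposites in $Q(\Rel(\cx))$ agree with those in $\Rel(\cx)$, the relation $RR^\circ R$ computed in $\cx\exreg$ coincides with $RR^\circ R$ computed in $\Rel(\cx)$. Since $\cx$ is a regular Mal'tsev category, $R$ is difunctional in $\Rel(\cx)$, so $RR^\circ R\le R$; as $\le$ is inherited, the same inequality holds in $\cx\exreg$. Thus every relation in $\cx\exreg$ is difunctional, and by the characterization from \cite{CLP} the exact category $\cx\exreg$ is a Mal'tsev category.

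The only real work, and the step I expect to be the main obstacle, is the bookkeeping in the second paragraph: one must check carefully that the $1$-cells of $Q(\Rel(\cx))$ really are the relations of $\cx\exreg$, and that their composition, opposites and order are genuinely those inherited from $\Rel(\cx)$ rather than some modification introduced by the splitting of idempotents. Once this compatibility is established the Mal'tsev property transfers for free, since difunctionality is expressed purely in terms of these inherited operations.
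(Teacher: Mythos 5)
Your proof is correct, but it takes a genuinely different route from the paper's. The paper uses the ``every reflexive relation is symmetric'' characterization of Mal'tsev categories together with a covering argument: given a reflexive relation $R$ on $A\in\cx\exreg$, it covers $A$ by a regular epimorphism $p\colon X\to A$ with $X\in\cx$, forms the inverse image $S=p^\circ Rp$ (a relation in \cx, since \cx is closed under subobjects), observes $1\le p^\circ p\le S$ so that $S$ is reflexive and hence symmetric in \cx, and then transfers symmetry back via $R=pp^\circ Rpp^\circ=pSp^\circ\le pS^\circ p^\circ=R^\circ$, using $pp^\circ=1$ from Lemma~\ref{propertiesofrelations}. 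You instead use the difunctionality characterization of \cite{CLP} and the structural fact that $\Rel(\cx\exreg)\simeq Q(\Rel(\cx))$ with composition, involution and local order inherited from $\Rel(\cx)$, so that the inequality $RR^\circ R\le R$ transfers verbatim. Your route is shorter once the bookkeeping you flag is accepted, and it isolates the real reason the Mal'tsev property survives exact completion: difunctionality is a purely equational condition in the allegory language, with no existential quantification over maps, hence is automatically stable under splitting idempotents. What the paper's route buys is independence from that identification of $\Rel(\cx\exreg)$ with $Q(\Rel(\cx))$ --- it needs only the facts recorded in Section~2 (full embedding, closure under subobjects, covers by objects of \cx) --- and, more importantly, it is a template that still works when the defining condition does involve existence of morphisms: the same cover-and-pull-back strategy is exactly what the paper redeploys for protomodularity in Proposition~\ref{completionprotomodular}, where a verbatim-transfer argument like yours would not go through because left pseudosymmetry quantifies existentially over a map $f$.
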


\proof
Let $R$ be a reflexive relation in $\cx\exreg$ on an object $A$. Cover $A$ with a regular epimorphism $p\colon X\to A$, where $X\in\cx$. Let $S$ be the inverse image $p^\circ Rp$ of $R$ along $p$. Then $1\le p^\circ p\le p^\circ Rp=S$ and so $S$ is also reflexive. Since \cx is a Mal'tsev category, $S$ is then symmetric. But now $R=pp^\circ Rpp^\circ = pSp^\circ\le pS^\circ p^\circ=R^\circ$ and so $R$ is symmetric. This proves that $\cx\exreg$ is also a Mal'tsev category.
\endproof
\begin{example}
Let $\mathbb T$ be a Mal'tsev algebraic theory; that is, an algebraic theory having a ternary term $p(x,y,z)$ satifying the identities $p(x,y,y)=x$ and $p(x,x,y)=y$. The category $\mathbb T (\mathsf{Top})$ of topological Mal'tsev algebras is simply the category of models of the  theory $\mathbb T$ in the category $\mathsf{Top}$ of topological spaces. It is well-known that  $\mathbb T (\mathsf{Top})$ is a regular Mal'tsev category, with regular epimorphisms given by open surjective homomorphisms (see \cite{JP}, for instance). This category certainly has stable coequalizers of equivalence relations, since a coequalizer $\pi \colon X \rightarrow X/R$ of an equivalence relation $R$ on $X$ is computed in $\mathbb T (\mathsf{Top})$ in the same way as in the exact Mal'tsev category $\mathbb T (\mathsf{Set})$ (the corresponding variety of universal algebras) by putting the quotient topology on $X/R$. Since in $\mathbb T (\mathsf{Top})$ quotient maps are open, they are pullback stable, and this gives the result. By Theorem \ref{thm:Maltsev} the category $\mathbb T (\mathsf{Top})$ is then a semi-localization of an exact Mal'tsev category. 
Observe that $\mathbb T (\mathsf{Top})$ is not exact, in general: given an internal equivalence relation $\xymatrix{
R \ar@<1ex>[r] \ar@<-1ex>[r]  & A}$ in $\mathbb T (\mathsf{Top})$ on a topological algebra $A$, it will be effective only when the topology on $R$ is the one induced by the product topology on $A \times A$. On the other hand, it is well known that the category $\mathbb T (\mathsf{Comp})$ of compact Hausdorff algebras is exact for any algebraic theory $\mathbb T$ (see \cite{BC}, for instance), so that ${\mathbb T (\mathsf{Comp})}_{\cx\exreg} = \mathbb T (\mathsf{Comp})$ in that case. 
\end{example}

\section{Protomodularity}\label{protomodularity}

In order to deal with semi-localizations of protomodular categories, we need to reformulate the notion of protomodularity in terms of properties of relations. This new characterization is inspired by the results of Zurab Janelidze \cite{Zurab-relations3} in the pointed context. In the following section we shall use the characterization of protomodularity in Theorem \ref{caracterisation} to prove that if \cx is a regular protomodular category, then $\cx\exreg$ is also protomodular.

Let \cc be a category with pullbacks, and 
$$\xymatrix{
R \ar@<1ex>[r]^{d} \ar@<-1ex>[r]_{c} & X }$$
a relation on $X$ in \cc, seen as a subobject of $X\x X$. If $x$ and $y$ are morphisms $Y\to X$ and the induced map $$\xymatrix{
{\binom{x}{y}} \, \colon Y\ar[r] & X\x X}$$ factors through $R$, we write $xRy$, relying on context to make it clear that we are making an assertion about $x$ and $y$, rather than forming a composite in the bicategory $\Rel(\cc)$.  

\begin{definition}
 We say that a binary relation $R$ is {\em left pseudoreflexive} if there is a morphism $e\colon R\to R$ satisfying $de=ce=d$. $R$ is {\em right pseudoreflexive} if the opposite relation $R^\circ$ is left pseudoreflexive, and {\em pseudoreflexive} if both $R$ and $R^\circ$ are left pseudoreflexive. 
\end{definition}

\begin{remark}
Such an $e$ is clearly unique if it exists, and to say that it exists is to say that $dRd$. More generally, if $xRy$, then we have $x=dz$ and $y=cz$ for a unique $z$, and now $dez=dz=x$ and $cez=dz=x$, and so $xRx$. Thus the condition says that $xRy$ implies $xRx$, and so agrees with the definition of left pseudoreflexive relation given in \cite{Zurab-relations3}. In terms of generalized elements, $e$ maps $(x,y)$ to $(x,x)$. This description makes it clear that $e$ is idempotent; alternatively, this follows form the facts that $dee=de$ and $cee=de=ce$ and the fact that $d$ and $c$ are jointly monomorphic. 
\end{remark}

\begin{definition}\label{pseudosymmetry}
  We say that a morphism $f\colon Y\to X$ exhibits $R$ as {\em left pseudosymmetric} if, when we form the pullback 
$$\xymatrix{
Z \ar[r]^q \ar[d]_p & R \ar[d]^d \\ Y \ar[r]_f & X }$$
there is a morphism $g\colon Z\to R$ with $dg=cq$ and $cg=fp$. We say that $R$ is left pseudosymmetric if there is such an $f$. 
\end{definition}

\begin{remark}
  In terms of generalized elements, this says that $(fz)Ry$ implies $yR(fz)$. 
Clearly if $f$ exhibits $R$ as left pseudosymmetric, then so does any composite $fa$, and in particular so does $fp$. But if $R$ is also left pseudoreflexive, then $ceq=deq=dq=fp$, and so $(fp)R(fp)$. Thus if $R$ is left pseudosymmetric and left pseudoreflexive, then we can choose our $f$ so that $fRf$. 
\end{remark}

\begin{remark}
  In the case where \cc is pointed, we always have $0R0$, and so $R$ is left pseudosymmetric if and only if $0Ry$ implies that $yR0$; in other words, we may always take $f$ to be the zero map $0\to X$. Thus in this pointed context, $R$ is left pseudosymmetric if and only if it is left 0-symmetric in the sense of \cite{Zurab-relations3}. We refer the reader also to \cite{AU}, where the notion of $0$-symmetric relation was first introduced for varieties of universal algebras. 
\end{remark}

\begin{remark}
In the case where \cc is regular, the conditions defining pseudoreflexive and pseudosymmetric relations can be expressed in terms of the calculus of relations. Then $R$ is left pseudoreflexive when $1\cap R^\circ R\le R$, and right pseudoreflexive when $1\cap RR^\circ\le R^\circ$; while $R$ is left pseudosymmetric when $Rf\le R^\circ f$, or equivalently $Rff^\circ\le R^\circ$, for some map $f\colon X\to X$. 
\end{remark}

We now recall a property that is known to characterize protomodular categories, and that we shall take as the definition of protomodularity in the present article:
\begin{definition} \cite{Bourn}
A category \cc with pullbacks is \emph{protomodular} if, given any pullback in \cc
$$\xymatrix{D \ar[r]^g \ar[d]_q  & E \ar[d]_p \\
A \ar[r]_f & B \ar@<-1ex>[u]_s 
}$$
along a split epimorphism $p$ with section $s$, the pair $(g, s)$ is jointly strongly epimorphic.
\end{definition}

\begin{theorem}\label{caracterisation}
  For a category \cc with finite limits, the following conditions are equivalent:
  \begin{enumerate}[(a)]
  \item \cc is protomodular;
\item every relation which is left pseudoreflexive and left pseudosymmetric is symmetric;
\item every relation which is left pseudoreflexive and left pseudosymmetric is pseudoreflexive.
  \end{enumerate}
\end{theorem}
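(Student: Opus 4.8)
The plan is to prove the cycle (a)$\Rightarrow$(b)$\Rightarrow$(c)$\Rightarrow$(a). The implication (b)$\Rightarrow$(c) is immediate: if a left pseudoreflexive and left pseudosymmetric relation $R$ is symmetric, then $R=R^\circ$ is again left pseudoreflexive, so $R$ is also right pseudoreflexive, hence pseudoreflexive. For the two substantial implications I would reduce ``symmetric'' to the single containment $R\le R^\circ$ (which forces $R^\circ\le R^{\circ\circ}=R$, so $R=R^\circ$), and I would read the defining condition of protomodularity in the contrapositive form: whenever the two maps $g$ and $s$ of a pullback-along-a-split-epimorphism square both factor through a monomorphism $m$, that $m$ is invertible.

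For (a)$\Rightarrow$(b), let $R$ be left pseudoreflexive, with witness $e\colon R\to R$ satisfying $de=ce=d$, and left pseudosymmetric; by the remarks following Definition~\ref{pseudosymmetry} I may choose the witness $f\colon Y\to X$ so that in addition $fRf$, giving $w\colon Y\to R$ with $dw=cw=f$. The key idea is to manufacture the split epimorphism required by protomodularity out of the idempotent $e$. I would form $\Delta_R\hookrightarrow R$, the equalizer of $e$ and $1_R$ (its ``diagonal part''); then $e$ corestricts to a split epimorphism $\bar e\colon R\to\Delta_R$ whose section is the inclusion $\iota\colon\Delta_R\to R$, and $\iota$ factors through $R\cap R^\circ$ because on $\Delta_R$ one has $d\iota=c\iota$. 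Next I would observe that the pullback $Z$ of $d$ along $f$ used to express pseudosymmetry is canonically \emph{the same} as the pullback of the split epimorphism $\bar e$ along the corestriction $w'\colon Y\to\Delta_R$ of $w$: both consist of the pairs with $f(y)=d(r)$. Under this identification the pullback leg is the projection $q\colon Z\to R$, while the auxiliary map $g\colon Z\to R$ of Definition~\ref{pseudosymmetry} (with $dg=cq$ and $cg=fp$) witnesses precisely that $q$ factors through $R^\circ$, hence through $R\cap R^\circ$. Protomodularity now makes the pair $(q,\iota)$ jointly strongly epic, so the monomorphism $R\cap R^\circ\hookrightarrow R$ is invertible, i.e.\ $R\le R^\circ$.

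For (c)$\Rightarrow$(a), I would start from a pullback of a split epimorphism $p\colon E\to B$ (section $s$) along $f\colon A\to B$, with leg $g\colon D\to E$, and suppose $g$ and $s$ both factor through a monomorphism $m\colon M\to E$; the goal is that $m$ is invertible. To this data I would attach the relation $R$ on $E$ given by the monomorphism $M\times_B E\hookrightarrow E\times E$, $(x,e)\mapsto(m(x),e)$ --- informally, $eRe'$ iff $p(e)=p(e')$ and $e\in M$. This $R$ is visibly left pseudoreflexive, and I claim it is left pseudosymmetric with witness $sf\colon A\to E$: indeed $sf$ lands in $M$ because $s$ does, and the hypothesis that $g$ factors through $m$ says exactly that any element whose $p$-fibre is hit by $f$ already lies in $M$, which is what left pseudosymmetry at $sf$ requires. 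Then (c) makes $R$ pseudoreflexive, hence right pseudoreflexive; applying the right-pseudoreflexivity morphism to the element $\langle sp,1_E\rangle\colon E\to R$ (which lands in $R$ since $sp$ factors through $m$ and $psp=p$) yields a retraction $t$ with $mt=1_E$, so $m$ is invertible and $(g,s)$ is jointly strongly epic.

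The main obstacle is the construction in (a)$\Rightarrow$(b): recognizing that the idempotent $e$ supplied by left pseudoreflexivity is exactly what turns the pseudosymmetry pullback into a pullback along a \emph{split} epimorphism, and then checking that the two resulting generators both factor through $R\cap R^\circ$. Throughout, since \cc is only assumed to have finite limits, every ``generalized element'' argument must be carried out through the jointly monic spans and the universal properties of the pullbacks and equalizers involved, with no appeal to images or to a calculus of relations.
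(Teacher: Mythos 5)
Your proposal is correct and takes essentially the same route as the paper's own proof: the same cycle (a)$\Rightarrow$(b)$\Rightarrow$(c)$\Rightarrow$(a), the same splitting of the idempotent $e$ through an equalizer (your $\Delta_R$ is the paper's $I$), the same application of protomodularity to the pseudosymmetry pullback re-read as a pullback along the split epimorphism onto $\Delta_R$, with both generators shown to land in $R\cap R^\circ$, and for (c)$\Rightarrow$(a) the same relation $M\times_B E$ with the same concluding step via $(sp)R1$ and right pseudoreflexivity. The only cosmetic differences are that the paper obtains the identification of the two pullbacks by pasting squares (using that $di$ is monic) rather than by direct comparison, and uses $g$ rather than $sf$ as the pseudosymmetry witness; both choices work and the verifications are essentially identical.
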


\proof
Suppose (a), and consider a left pseudoreflexive relation 
$$\xymatrix{
R \ar@<1ex>[r]^d \ar@<-1ex>[r]_c & X }$$
with corresponding idempotent $e\colon R\to R$. Split $e$ as 
$$\xymatrix{
R \ar[r]^{r} & I \ar[r]^i & R }$$
with $ri=1$. Now $dir=de=d=ce=cir$, and so $di=ci$; but since $d$ and $c$ are jointly monic and $i$ is monic, it follows that $di$ is itself monic. Furthermore, if $dh=ch$ then $deh=dh$ and $ceh=dh=ch$, and so $eh=h$, and $h$ factorizes through $i$; thus $i$ is the equalizer of $d$ and $c$. 

If $R$ is also left pseudosymmetric, we may choose $f\colon Y\to X$ (in Definition \ref{pseudosymmetry})  in such a way that $fRf$. Thus $f=dh=ch$ for some $h\colon Y\to R$, and so $h=ik$ for some $k\colon Y\to I$. 

We now have  pullbacks 
$$\xymatrix{
Z \ar[r]^{q} \ar[d]_{p} & R \ar[d]_{r} \\ 
Y \ar[r]_k \ar@{=}[d] & I \ar@<-1ex>[u]_{i} \ar[d]^{di}  \\
Y \ar[r]_{f} & X 
}$$ 
with $dir=de=d$, and with $r$ a split epimorphism.  Since \cc is protomodular, $q$ and $i$ are jointly strongly epimorphic. 

Consider the pullback
$$\xymatrix{
S \ar[r]^m \ar[d] & R \ar[d]^{\binom{d}{c}} \\
R \ar[r]_{\binom{c}{d}} & X\x X}$$ 
in which $m$ is clearly monic; this represents the intersection $R\cap R^\circ$. 
Now $f$ exhibits $R$ as left pseudosymmetric, and the corresponding map $g\colon Z\to R$ 
induces a factorization of $q$ through $m$. On the other hand $di=ci$, and so $i$ also factorizes through $m$. But now since $q$ and $i$ are jointly strongly  epimorphic, it follows that $m$ is invertible, and so that $R$ is symmetric.  This proves that (a) implies (b). 

 To see that (b) implies (c), observe that for a symmetric relation, left pseudoreflexivity, right pseudoreflexivity, and pseudoreflexivity are all equivalent. 

To prove that (c) implies (a), suppose that we have a pullback
$$\xymatrix{
D \ar[r]^{g} \ar[d]_{q} & E \ar[d]_{p} \\
A \ar[r]_{f} & B \ar@<-1ex>[u]_{s} }$$
where $p$ has a section $s\colon B\to E$. We must show that $g$ and $s$ are jointly strongly epimorphic. 
Let $m\colon C\to E$ be an arbitrary subobject, and consider the relation $R$ defined by the pullbacks
$$\xymatrix{
R \ar[r]^n \ar[d]_q & P \ar[r]^{p_1} \ar[d]_{p_2} & E \ar[d]^{p} \\
C \ar[r]_m \ar[d]_{m} & E \ar[r]_{p} & B \\
E }$$
(this is indeed a relation because $m$ is a monomorphism). We are going to show that if $g$ and $s$ factorize through $m$, then $m$ is invertible.

 To give $x$ and $y$ satisfying  $xRy$ is equivalently to give $u\colon Y\to C$ and $y\colon Y\to E$ such that $pmu=py$ (with $mu=x$). This clearly implies that $pmu=pmu$; thus $R$ is left pseudoreflexive. 

Next we shall show that $g$ exhibits $R$ as left pseudosymmetric. Now $(gz)Ry$ means that $gz=mu$, for some $u$, and $pgz=py$. Then $py=pgz=fqz$, and so by the universal property of the pullback, there is a unique $v$ with $qv=qz$ and $gv=y$. If $g$ factorizes as $g=mh$, then $y=gv=mhv$. Thus $py=pgz$ and $y=mhv$, and therefore $yR(gz)$; this proves that $R$ is indeed left pseudosymmetric. 

By (c), we may deduce that $R$ is pseudoreflexive, and so in particular right pseudoreflexive. Now $psp=p$, and $sp=mtp$ for some $t$, since $s$ was assumed to factorize through $m$. Thus $(mtp)R1$, and so since $R$ is right pseudoreflexive $1R1$; but this implies that the identity $1\colon E\to E$ factorizes through $m$, and so that $m$ is invertible.  
\endproof

\begin{remark}
The previous theorem can be seen as the extension to non-pointed categories of Theorem $12$ in \cite{Zurab-relations3}. \end{remark}

\section{The semi-abelian case}\label{Semi-abelian}

In this section, we prove various analogues of Theorem~\ref{thm:exact} for exact categories satisfying some additional exactness properties, such as being protomodular or semiabelian. Any category of topological semi-abelian algebras turns out to be a semi-localization of a semi-abelian category. 
\begin{theorem}\label{thm:protomodular}
  For a category \cx, the following conditions are equivalent:
  \begin{enumerate}[(a)]
    \item \cx is regular, protomodular, and has  stable coequalizers of equivalence relations;
  \item \cx is a semi-localization of an exact protomodular category \cc;
\item \cx is a semi-localization of an exact protomodular category \cc, and the units of the reflection are regular epimorphisms;
\item \cx is regular, is a semi-localization of its exact completion $\cx\exreg$ as a regular category, and $\cx\exreg$ is protomodular. 
  \end{enumerate}
\end{theorem}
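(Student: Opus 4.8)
The plan is to prove Theorem \ref{thm:protomodular} by reducing it, as much as possible, to the already-established Theorem \ref{thm:exact}, and to isolate the genuinely new content in a single auxiliary result about the exact completion. The structure of the equivalence is exactly parallel to the Mal'tsev case (Theorem \ref{thm:Maltsev}): the four conditions differ from those of Theorem \ref{thm:exact} only by the insertion of the word ``protomodular'' in the appropriate spots, so the strategy should mirror that earlier proof almost verbatim. Concretely, I would first invoke Theorem \ref{thm:exact}, which already gives the equivalence of the underlying statements (regular with stable coequalizers; semi-localization of an exact category; same with regular-epi units; semi-localization of $\cx\exreg$). It then suffices to track how the protomodularity clause propagates around the cycle of implications.

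First I would observe that $(d)\Rightarrow(c)\Rightarrow(b)$ are immediate: the passage from $(d)$ to $(c)$ uses, as in the proof of Theorem \ref{thm:exact}, that \cx is closed in $\cx\exreg$ under subobjects so the units are regular epimorphisms, and the witnessing exact category in $(d)$, namely $\cx\exreg$, is protomodular by hypothesis; while $(c)\Rightarrow(b)$ is a trivial weakening. Next, for $(b)\Rightarrow(a)$ I would argue, exactly as in the Mal'tsev case, that protomodularity passes to full subcategories closed under finite limits. Indeed, protomodularity as formulated in the Definition \cite{Bourn} above is a condition on pullbacks along split epimorphisms and on joint strong epimorphy; since a full reflective subcategory \cx of \cc is closed under the relevant finite limits and since split epimorphisms, their sections, and the joint-strong-epimorphy condition are all inherited, any semi-localization of an exact protomodular \cc is itself protomodular. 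Combined with Theorem \ref{thm:exact}$(b)\Rightarrow(a)$, this yields clause $(a)$.

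The crux is the implication $(a)\Rightarrow(d)$. Theorem \ref{thm:exact}$(a)\Rightarrow(d)$ already tells me that a regular \cx with stable coequalizers of equivalence relations is a semi-localization of $\cx\exreg$; what remains is to show that if \cx is moreover protomodular, then $\cx\exreg$ is protomodular as well. This is the analogue of the Mal'tsev proposition (``if \cx is a regular Mal'tsev category, then $\cx\exreg$ is also Mal'tsev'') and is precisely where Theorem \ref{caracterisation} is designed to be applied. The plan is to record this as a separate proposition (the ``Proposition \ref{completionprotomodular}'' referred to in the introduction) and prove it by the relational criterion: given a left pseudoreflexive and left pseudosymmetric relation $R$ in $\cx\exreg$ on an object $A$, cover $A$ by a regular epimorphism $p\colon X\to A$ with $X\in\cx$ and form the inverse image $S=p^\circ R p$ on $X$. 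I expect the main obstacle to be verifying that the relational properties descend and lift correctly along $p$: one must check that $S$ inherits left pseudoreflexivity and left pseudosymmetry from $R$ (using $1\le p^\circ p$ and the relational reformulations $1\cap S^\circ S\le S$ and $Sf'\le S^\circ f'$ recorded in the Remark above), then apply Theorem \ref{caracterisation}$(b)$ in \cx to conclude $S$ is symmetric, and finally transport symmetry back up via $R=pSp^\circ\le pS^\circ p^\circ=R^\circ$ as in the Mal'tsev computation. By Theorem \ref{caracterisation} the symmetry of every such $R$ characterizes protomodularity of $\cx\exreg$, completing the cycle.

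The delicate point requiring care, and the place where the argument is not purely formal, is matching the witnessing maps $e$ and $f$ for the pseudo-conditions across the cover $p$: unlike symmetry or reflexivity, left pseudosymmetry is witnessed by an auxiliary map $f\colon X\to X$, so in passing from $R$ on $A$ to $S$ on $X$ I must produce a corresponding witness for $S$ and ensure the relational inequality $Sff^\circ\le S^\circ$ genuinely follows from $Rf'f'^\circ\le R^\circ$. I would handle this by working entirely within the relational calculus of $\Rel(\cx\exreg)$ and restricting along $p$, exploiting that $p$ is a regular epimorphism (so $pp^\circ=1$ by Lemma \ref{propertiesofrelations}) and that \cx is closed under subobjects in $\cx\exreg$; this guarantees that $S$, being a subobject of $X\times X$ with $X\in\cx$, actually lives in \cx, so that Theorem \ref{caracterisation} may legitimately be invoked there.
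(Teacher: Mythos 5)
Your proposal is correct and takes essentially the same approach as the paper: the paper likewise reduces everything to Theorem~\ref{thm:exact} together with the two facts that protomodularity is inherited by full subcategories closed under finite limits (giving (b)$\Rightarrow$(a)) and that $\cx\exreg$ is protomodular whenever \cx is (Proposition~\ref{completionprotomodular}), the latter proved exactly by your relational argument via Theorem~\ref{caracterisation}, covering $A$ with $p\colon X\to A$, setting $S=p^\circ Rp$, and transporting symmetry back using $pp^\circ=1$. Even the delicate point you flag is resolved just as you suggest: the witness $f\colon B\to A$ for left pseudosymmetry of $R$ is first replaced by one whose domain lies in \cx (by covering its domain with an object of \cx), and then pulled back along $p$ to produce a witness $g$ with $Sg\le S^\circ g$, the pullback lying in \cx because \cx is closed under subobjects.
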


\proof
Just as in the case of Theorem~\ref{thm:Maltsev}, we need only show that (b) implies that \cx is protomodular, and that $\cx\exreg$ is protomodular if \cx is so.
The first of these follows immediately from the fact that any full subcategory of a protomodular category closed under finite limits is itself protomodular; the second from the following proposition.
\endproof

\begin{proposition}\label{completionprotomodular}
  If \cx is a regular protomodular category, then its exact completion $\cx\exreg$ is also protomodular.
\end{proposition}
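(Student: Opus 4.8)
The plan is to reduce everything, via the relational characterization of protomodularity in Theorem~\ref{caracterisation}, to a ``cover and transport'' argument of exactly the kind used in the preceding Mal'tsev proposition. Since $\cx\exreg$ is exact, and hence finitely complete, it suffices by the equivalence (a)$\Leftrightarrow$(b) of Theorem~\ref{caracterisation} to prove that any relation $R$ on an object $A$ of $\cx\exreg$ that is left pseudoreflexive and left pseudosymmetric is already symmetric. I would begin as in the Mal'tsev case: cover $A$ by a regular epimorphism $p\colon X\to A$ with $X\in\cx$, and form the inverse image $S=p^\circ Rp$ of $R$ along $p$, a relation on $X$. As $S$ is a subobject of $X\times X$ and $\cx$ is closed under subobjects in $\cx\exreg$, the relation $S$ lies in $\cx$. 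Granting for the moment that $S$ is symmetric, the endgame is identical to the Mal'tsev argument: using $pp^\circ=1$ from Lemma~\ref{propertiesofrelations}(b) we get $R=pp^\circ Rpp^\circ=pSp^\circ=pS^\circ p^\circ=R^\circ$, so that $R$ is symmetric.

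The heart of the matter is thus to show that $S$ inherits both left pseudoreflexivity and left pseudosymmetry from $R$; the protomodularity of $\cx$ (again through Theorem~\ref{caracterisation}) then forces $S=S^\circ$. Left pseudoreflexivity transports with no difficulty: in terms of generalized elements $uSv$ means $(pu)R(pv)$, which by left pseudoreflexivity of $R$ gives $(pu)R(pu)$, that is $uSu$; equivalently, in the calculus of relations, $1\cap S^\circ S=1\cap p^\circ R^\circ Rp\le p^\circ(1\cap R^\circ R)p\le p^\circ Rp=S$, the last inequality being left pseudoreflexivity of $R$ in the form $1\cap R^\circ R\le R$.

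The one genuine obstacle is transporting left pseudosymmetry, since its witnessing map is tied to the base $A$, which need \emph{not} lie in $\cx$. Suppose $R$ is left pseudosymmetric, witnessed by a map $f\colon Y\to A$, i.e.\ $Rf\le R^\circ f$. Because any composite $fa$ witnesses left pseudosymmetry just as well, I would first precompose $f$ with a regular epimorphism onto $Y$ from an object of $\cx$, so as to assume $Y\in\cx$. I would then pull back $p$ along $f$ to obtain $P=X\times_A Y$ with projections $f'\colon P\to X$ and $b\colon P\to Y$ satisfying $pf'=fb$. The crucial observation is that $P\in\cx$: even though $A\notin\cx$, the object $P$ is a subobject of $X\times Y$, a product of objects of $\cx$, and $\cx$ is closed under subobjects. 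Now $f'$ exhibits $S$ as left pseudosymmetric, since
$$Sf'=p^\circ Rpf'=p^\circ R(fb)=p^\circ(Rf)b\le p^\circ(R^\circ f)b=p^\circ R^\circ pf'=S^\circ f',$$
and, as $I\colon\cx\to\cx\exreg$ is a regular functor, these relational computations may equally be read inside $\cx$. Applying Theorem~\ref{caracterisation} to the protomodular category $\cx$ now yields $S=S^\circ$, and the transport step of the first paragraph finishes the proof. The step I expect to demand the most care is precisely this last one: the reduction to a witness $f$ with domain in $\cx$, and the verification that the pullback $P=X\times_A Y$ stays within $\cx$ despite its apex being built over an object $A$ outside $\cx$.
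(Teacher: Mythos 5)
Your proposal is correct and follows essentially the same route as the paper's own proof: cover $A$ by $p\colon X\to A$ with $X\in\cx$, transport left pseudoreflexivity and left pseudosymmetry to $S=p^\circ Rp$ (replacing the witness $f$ by one with domain in \cx and pulling back along $p$ to get an apex in \cx as a subobject of a product of \cx-objects), apply Theorem~\ref{caracterisation} in \cx, and conclude via $R=pSp^\circ$. The computations are the same ones appearing in the paper, so there is nothing to add.
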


\proof
Let $R$ be a left pseudoreflexive and left pseudosymmetric relation in $\cx\exreg$ on an object $A$. Cover $A$ with a regular epimorphism $p\colon X\to A$, where $X\in \cx$.
Now let $S=p^\circ Rp$. This is a relation in \cx, and
\begin{align*}
  1\cap S^\circ S &= 1\cap p^\circ R^\circ pp^\circ Rp \\
  &= 1\cap p^\circ R^\circ Rp \\
  &\le p^\circ p \cap p^\circ R^\circ Rp \\
&= p^\circ(1\cap R^\circ R)p \\
&\le p^\circ R p \\
&= S
\end{align*}
so that $S$ is left pseudoreflexive.

Suppose that there is a morphism $f\colon B\to A$ in $\cx\exreg$ for which $Rf\le R^\circ f$; then we claim that there exists such a morphism with $B\in\cx$. For if $B\notin\cx$, then we may cover it by some $g\colon Y\to B$ with $Y\in\cx$, and now restricting the inequality $Rf\le R^\circ f$ along $g$ we obtain an inequality $Rfg\le R^\circ fg$.

 Suppose then that $B\in\cx$ and that $f\colon B\to A$ satisfies $Rf\le R^\circ f$. Form the pullback
$$\xymatrix{
Y \ar[r]^{g} \ar[d]_{q} & X \ar[d]^{p} \\
B \ar[r]_{f} & A }$$
and note that $Y$ is a subobject of the \cx-object $B\x X$, and so itself  lies in \cx. Now
\begin{align*}
  Sg &= p^\circ R pg \\
  &= p^\circ Rfq \\
  &\le p^\circ R^\circ fq \\
  &= p^\circ R^\circ pg \\
  &= S^\circ g
\end{align*}
and so $S$ is also left pseudosymmetric. Since \cx is protomodular, $S$ is in fact symmetric by Theorem~\ref{caracterisation} . But now
$$R=pp^\circ Rpp^\circ = pSp^\circ=pS^\circ p^\circ = (pSp^\circ)^\circ=R^\circ$$
and so $R$ is symmetric. Thus $\cx\exreg$ is protomodular by Theorem~\ref{caracterisation} once again.
\endproof

We now turn to the case of semi-abelian categories. 
Recall that a homological category is semi-abelian \cite{JMT} when it is also exact, and has binary coproducts. The categories of groups, rings, Lie algebras, crossed modules, and ${\mathbb C}^*$-algebras are all semi-abelian. On the other hand, the categories $\mathsf{Grp}(\mathsf{Top})$ of topological groups and  $\mathsf{Grp}(\mathsf{Haus})$ of Hausdorff groups are homological, but not semi-abelian.
\begin{theorem}\label{thm:semi-abelian}
  For a category \cx, the following conditions are equivalent:
  \begin{enumerate}[(a)]
    \item \cx is homological, and has binary coproducts and  stable coequalizers of equivalence relations;
  \item \cx is a semi-localization of a semi-abelian category \cc;
\item \cx is a semi-localization of a semi-abelian category \cc, and the units of the reflection are regular epimorphisms;
\item \cx is regular, is a semi-localization of its exact completion $\cx\exreg$ as a regular category, and $\cx\exreg$ is semi-abelian.
  \end{enumerate}
\endproof
\end{theorem}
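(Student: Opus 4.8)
The plan is to follow the template established by the proofs of Theorem~\ref{thm:Maltsev} and Theorem~\ref{thm:protomodular}, reducing everything to the already-proven Theorem~\ref{thm:exact} together with the stability results for the exact completion. Recall that a semi-abelian category is precisely a category which is exact, protomodular, pointed, and has binary coproducts. The equivalences of Theorem~\ref{thm:exact} already handle the ``regular with stable coequalizers of equivalence relations'' part, and Proposition~\ref{completionprotomodular} already tells us that protomodularity is inherited by the exact completion $\cx\exreg$. So the genuinely new content is to track the remaining structure — being pointed and having binary coproducts — through the passage between \cx and $\cx\exreg$, and to recognize that ``homological $+$ binary coproducts $+$ stable coequalizers of equivalence relations'' is the correct condition (a), since homological means regular, pointed, and protomodular.

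As with the earlier theorems, I would note that given Theorem~\ref{thm:exact} and Proposition~\ref{completionprotomodular}, the implications (d)$\Rightarrow$(c)$\Rightarrow$(b) are essentially formal, and (b)$\Rightarrow$(a) requires only checking that a semi-localization \cx of a semi-abelian category \cc inherits the defining properties of a homological category together with binary coproducts. Being pointed, regular, and having stable coequalizers of equivalence relations come from Theorem~\ref{thm:exact} (a zero object in \cc is preserved by the reflector, giving a zero object in \cx, since \cx is reflective and $U$ preserves it); protomodularity of \cx follows because \cx is a full subcategory of \cc closed under finite limits, exactly as in the proof of Theorem~\ref{thm:protomodular}. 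For binary coproducts, \cx is reflective in \cc, and reflective subcategories are always closed under colimits that exist in the ambient category; since \cc has binary coproducts, so does \cx, computed by applying the reflector $L$ to the coproduct formed in \cc. The substantive direction is (a)$\Rightarrow$(d): here I would invoke Theorem~\ref{thm:exact} to get that \cx is a semi-localization of $\cx\exreg$, and then must show $\cx\exreg$ is semi-abelian, i.e.\ exact (automatic, being an exact completion), protomodular (Proposition~\ref{completionprotomodular}, using that \cx is regular protomodular, which follows from \cx being homological), pointed, and with binary coproducts.

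The main obstacle, and the one place where I would slow down, is verifying that $\cx\exreg$ has a zero object and binary coproducts when \cx does. For the zero object, the unit $I\colon\cx\to\cx\exreg$ is fully faithful and its image is closed under subobjects; I would argue that the image of the zero object of \cx remains a zero object in $\cx\exreg$. The delicate point is that coproducts in $\cx\exreg$ need not simply be coproducts computed in \cx, because $\cx\exreg$ has more objects (formal quotients of equivalence relations). The cleanest route is to observe that every object $A$ of $\cx\exreg$ is a regular-epimorphic quotient $q\colon IX\twoheadrightarrow A$ of an object coming from \cx, and to construct the coproduct $A+A'$ by taking a coproduct $X+X'$ in \cx, transporting it into $\cx\exreg$, and then forming the quotient by the equivalence relation generated by $q$ and $q'$; since $\cx\exreg$ is exact with stable coequalizers of equivalence relations and $I$ preserves finite coproducts, this coequalizer exists and has the correct universal property. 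One must check that this quotient genuinely satisfies the coproduct universal property against arbitrary objects of $\cx\exreg$, using that maps out of $A+A'$ correspond to compatible pairs of maps out of $IX$ and $IX'$. This is the step where the extra objects of the exact completion could cause trouble, so it deserves careful treatment rather than being dismissed as routine.
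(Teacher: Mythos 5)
Your reduction matches the paper's own: given Theorem~\ref{thm:exact} and Proposition~\ref{completionprotomodular}, the implications (d)$\Rightarrow$(c)$\Rightarrow$(b) and (b)$\Rightarrow$(a) go exactly as you describe, and the substance of (a)$\Rightarrow$(d) is to show that $\cx\exreg$ is pointed and has binary coproducts. Your zero-object paragraph is only a placeholder (you assert ``the image of the zero object remains a zero object'' without proving initiality of $I0$; the paper has a clean two-line argument: the functors $1\to\cx$ and $\cx\to 1$ are regular and mutually adjoint, and $(-)\exreg$ is a 2-functor, so the adjunctions pass to the completions), but that is fixable. The genuine gap is in the coproduct step.

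There you invoke, as a key ingredient, that ``$I$ preserves finite coproducts''. This is false in general, and it already fails in one of the paper's own examples. By Example~\ref{reduced}, $\mathsf{RedRng}\exreg=\mathsf{CRng}$. Take $X=\mathbb{Z}[\sqrt{2}]$ and $Y=\mathbb{F}_2$, both reduced. Their coproduct in $\mathsf{CRng}$ contains $X\otimes_{\mathbb{Z}}Y\cong\mathbb{F}_2[t]/(t^2)$ as a subring, hence contains the nonzero nilpotent $\sqrt{2}\otimes\overline{1}$ and is not reduced, whereas the coproduct formed in $\mathsf{RedRng}$ is reduced; so $I$ does not preserve this coproduct. (This is precisely what separates $\cx\exreg$ from the ex/lex completion of \cite{Gran-SemiabelianExactCompletions} whose argument is being adapted: there the objects of the base are regular projectives, which is what makes the embedding preserve coproducts; in $\cx\exreg$ they are not projective.) Consequently your construction cannot have the stated universal property: applied to $A=IX$, $B=IY$ with the identity covers, it outputs $I(X+Y)$ (coproduct taken in $\cx$), which the example above shows is not the coproduct of $IX$ and $IY$ in $\cx\exreg$. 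You were right to single out the universal-property check as the crux --- it is also the tersest point of the paper's own proof, whose appeal to ``commutativity of colimits with colimits'' likewise presupposes that the $\cx$-coproducts of the covering objects serve as coproducts in $\cx\exreg$ --- but the ingredient you propose in order to discharge it is exactly the statement that fails, so flagging the danger does not repair it. A second, smaller omission: ``the equivalence relation generated by $q$ and $q'$'' is not available in a general exact category (there is no transitive closure construction); the paper obtains it as the regular image of the reflexive graph $R+S\rightrightarrows X+Y$, a reflexive relation, which is an equivalence relation because protomodular categories are Mal'tsev. That is the one place where protomodularity enters the coproduct argument, and it is absent from your proposal.
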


\proof
Once again, the implications $(d)\Rightarrow(c)$ and $(c)\Rightarrow(b)$ are straightforward, and $(b)\Rightarrow(a)$ follows from previous theorems and the fact that any full reflective subcategory of a pointed category with binary coproducts is itself pointed and has binary coproducts. Finally to prove $(a)\Rightarrow(d)$ we should prove that $\cx\exreg$ is pointed and has binary coproducts for any  \cx as in $(a)$.

The zero object of \cx defines a functor $1\to\cx$ which is both left and right adjoint to the unique functor $\cx\to1$. Both of these functors are regular, and so the adjunctions pass to the exact completions. This proves that $\cx\exreg$ has a zero object. 

As for binary coproducts, an argument which appeared in \cite{Gran-SemiabelianExactCompletions} can be adapted to our purposes. Any two objects of $\cx\exreg$ can be presented as coequalizers of equivalence relations
$$\xymatrix @R1pc {
R \ar@<1ex>[r] \ar@<-1ex>[r] & X \ar[r] & A \\
S \ar@<1ex>[r] \ar@<-1ex>[r] & Y \ar[r] & B }$$
with $X$, $Y$, $R$, and $S$ all in \cx. The coproduct
$$\xymatrix{
R+S \ar@<1ex>[r] \ar@<-1ex>[r] & X+Y }$$
need not define an equivalence relation on $X+Y$. On the other hand it does certainly define a reflexive graph, so its regular image is a reflexive relation on $X+Y$. But \cx is protomodular, and  so a Mal'tsev category; thus this reflexive relation is in fact an equivalence relation, and therefore has a coequalizer $X+Y\to C$. Furthermore this map $X+Y\to C$ is clearly also the coequalizer of the reflexive graph defined by $R+S$. Now $C$ is the coproduct of $A$ and $B$ by commutativity of colimits with colimits. 
\endproof

In a pointed protomodular category \cx, a morphism $m\colon X\to Y$ is said to be {\em Bourn-normal} \cite{BournNormal} if there is an equivalence relation $R$ on $Y$ and a discrete fibration
$$\xymatrix{
X\x X \ar[r] \ar@<1ex>[d]^{ \pi_2} \ar@<-1ex>[d]_{ \pi_1} & R \ar@<1ex>[d] \ar@<-1ex>[d] \\
X \ar[r]_m & Y. }$$
It turns out that such an $m$ is necessarily a monomorphism.
For any equivalence relation 
$$\xymatrix{
R \ar@<1ex>[r]^d \ar@<-1ex>[r]_c & Y}$$ 
on $Y$, we may obtain a Bourn-normal monomorphism $m\colon X\to Y$ by composing $c\colon R\to Y$ with the kernel $X\to R$ of $d\colon R\to Y$. This defines an isomorphism between the poset of equivalence relations on $Y$ and the poset of Bourn-normal subobjects of $Y$ \cite[Proposition~3.2.12]{BB}. If the category \cx is exact, then the Bourn-normal morphisms coincide with the normal monomorphisms, but in general this is false. For instance, monomorphisms and Bourn-normal monomorphisms coincide in the category $\mathsf{Ab}(\mathsf{Top})$ of topological abelian groups, but they are not necessarily normal monomorphisms.

 Observe that, in the protomodular context, giving a coequalizer of an equivalence relation is equivalent to giving a cokernel of the corresponding Bourn-normal monomorphism. 

We say that the cokernel $q$ of a morphism $x$  is stable if, for any $f$ as in the diagram below,
$$\xymatrix{
& P \ar[d]^u \ar[r]^{v} & D \ar[d]^f \\
A \ar[r]_x \ar[ur]^{x'} & B \ar[r]_q & C }$$
if we form the pullback $P$ and the unique induced $x'$ for which $ux'=x$ and $vx'=0$, then $v$ is the cokernel of $x'$. 

This allows the following further reformulation:

\begin{proposition}
  A category \cx is a semi-localization of a semi-abelian category if and only if it is homological, has binary coproducts, and every Bourn-normal monomorphism has a stable cokernel. \endproof
\end{proposition}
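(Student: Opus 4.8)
The plan is to reduce everything to Theorem~\ref{thm:semi-abelian}. By the equivalence of conditions (a) and (b) there, a category \cx is a semi-localization of a semi-abelian category if and only if \cx is homological, has binary coproducts, and has stable coequalizers of equivalence relations. Since both this condition and the one in the statement require \cx to be homological with binary coproducts, it suffices to prove that, in a homological category, \emph{stable coequalizers of equivalence relations} is equivalent to the requirement that \emph{every Bourn-normal monomorphism has a stable cokernel}.

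I would first dispose of existence. As recalled just before the statement, for each object $X$ the passage from an equivalence relation $R$ on $X$ (with legs $d,c\colon R\to X$) to the Bourn-normal monomorphism $x\colon A\to X$ obtained by composing $c$ with the kernel of $d$ is a bijection between equivalence relations on $X$ and Bourn-normal subobjects of $X$, and under this bijection the coequalizer of $R$ is the cokernel of $x$. Consequently, coequalizers of all equivalence relations exist exactly when cokernels of all Bourn-normal monomorphisms exist, so the two existence clauses match.

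For the stability part, fix $R$, the associated Bourn-normal monomorphism $x\colon A\to X$, and the common value $q\colon X\to Q$ of the coequalizer of $R$ and the cokernel of $x$; note $qd=qc$. Given any $f\colon D\to Q$, form the pullback $P=X\times_Q D$ with projections $u\colon P\to X$ and $v\colon P\to D$. Then $v$ is the pullback of $q$ along $f$, and $P,u,v$ together with the unique $x'\colon A\to P$ satisfying $ux'=x$ and $vx'=0$ are exactly the data in the definition of a stable cokernel. On the other hand, pulling back the whole coequalizer diagram $R\rightrightarrows X\to Q$ along $f$ produces the equivalence relation $S=R\times_Q D$ on $P$, whose two legs are induced by $d$ and $c$ while leaving the $D$-coordinate unchanged; stability of the coequalizer of $R$ (in the sense used throughout, cf.\ the diagram in Proposition~\ref{semi-localizationimpliesstability}) is precisely the assertion that $v$ is the coequalizer of $S$.

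The crux is to match these two conditions, and for this I would identify the Bourn-normal monomorphism associated to $S$ with $x'$. Computing, the kernel of the first leg of $S$ consists of those $r\in R$ with $dr=0$ paired with $0\in D$, hence is a copy of $\ker(d)$; postcomposing with the second leg of $S$ sends it onto the subobject $\{(cr,0):dr=0\}$ of $P$. Since $x=c\circ\ker(d)$ and $x'=\langle x,0\rangle$, this is exactly the image of $x'$, so the Bourn-normal monomorphism of $S$ is $x'$, and by the correspondence the coequalizer of $S$ is the cokernel of $x'$. Thus ``$v$ is the coequalizer of $S$'' and ``$v$ is the cokernel of $x'$'' are the same assertion; letting $f$ range over all morphisms into $Q$ shows stability of the coequalizer of $R$ is equivalent to stability of the cokernel of $x$, and ranging over all $R$ (equivalently all Bourn-normal monomorphisms) completes the proof. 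The main obstacle is exactly this last identification: one must use the correct pulled-back relation $S=R\times_Q D$, which carries the constraint that the two $D$-coordinates agree, rather than the naive inverse image $(u\times u)^{-1}R$ (for which $v$ would fail even to coequalize $S$); granted that, the identification of the normal monomorphism of $S$ with $x'$ is a short diagram chase.
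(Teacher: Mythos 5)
Your proof is correct and follows essentially the same route as the paper, which presents this proposition as an immediate reformulation of Theorem~\ref{thm:semi-abelian}(a) via the correspondence between equivalence relations and Bourn-normal subobjects and the matching of the two stability conditions. Your write-up simply makes explicit the details the paper leaves implicit (in particular the identification of the pulled-back relation $R\times_Q D$ and of its associated Bourn-normal monomorphism with $x'$), and these details are right.
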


We may analyze further, in a homological category \cx,  the condition that Bourn-normal monomorphisms have stable cokernels. Suppose then that $m\colon A\to B$ is Bourn-normal. If it has a cokernel $q\colon B\to C$, we may form the kernel $k\colon K\to B$ and the unique factorization $m=kn$ of $m$ through $k$. Now pullback $q$ along $0\to C$ to get a diagram
$$\xymatrix{
& K \ar[r]^0 \ar[d]^k & 0 \ar[d] \\
A \ar[ur]^{n} \ar[r]_{m} & B \ar[r]_q & C. }$$
Stability of the cokernel $q$ implies that $K\to 0$ is the cokernel of $n$. Thus the Bourn-normal monomorphism factorizes as a morphism with trivial cokernel (itself necessarily a Bourn-normal monomorphism) followed by a normal monomorphism.

Suppose conversely that every Bourn-normal $m\colon A\to B$ in a homological category \cx factorizes as a morphism $n\colon A\to K$ with trivial cokernel followed by a normal monomorphism $k\colon K\to B$.
Then $k$ is the kernel of some morphism $f\colon B\to D$, but we can factorize $f$ as a regular (=normal) epimorphism $q\colon B\to C$ followed by a monomorphism $j\colon C\to D$. Now $k$ is the kernel of $f=jq$, but $j$ is a monomorphism and so $k$ is also the kernel of $q$. Since $q$ is a cokernel of some map, it is the cokernel of its kernel $k$. Since $n$ has trivial cokernel, $q$ is also the cokernel of $m=kn$; thus the Bourn-normal $m$ does have a cokernel. 

Now consider a pullback 
$$\xymatrix{
&& B' \ar[d]^{g'} \ar[r]^{q'} & C' \ar[d]^{g} \\
A \ar[r]_n & K \ar[r]_k \ar[ur]^{k'} & B \ar[r]_q & C }$$
of $q$ by an arbitrary morphism $g$, and let  $k'$ be the unique arrow satisfying $q'k'=0$ and $g'k'=k$. Then $k'$ is the kernel of the normal epimorphism $q'$, and so $q'$ is the cokernel of $k'$. 
From the fact that $n$ has trivial cokernel it then follows that $q'$ is also the cokernel of $k'n$. This proves that the cokernel $q$ of $kn=m$ is stable.
 We record this as:

\begin{proposition}
  A category \cx is a semi-localization of a semi-abelian category if and only if it is homological, has binary coproducts, and every Bourn-normal monomorphism factorizes as a morphism with trivial cokernel followed by a normal monomorphism. \endproof
\end{proposition}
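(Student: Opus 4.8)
The plan is to reduce everything to the immediately preceding proposition, which already characterizes the semi-localizations of semi-abelian categories as the homological categories with binary coproducts in which every Bourn-normal monomorphism has a \emph{stable} cokernel. Granting that, the homological hypothesis and the existence of binary coproducts are common to both statements, so it suffices to prove, inside any homological category \cx with binary coproducts, the equivalence between the two conditions on Bourn-normal monomorphisms: having a stable cokernel, and factoring as a morphism with trivial cokernel followed by a normal monomorphism. Thus the whole proposition comes down to a two-way implication about a single Bourn-normal monomorphism $m\colon A\to B$.

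For the direction from stable cokernels to the factorization, I would start from a Bourn-normal $m\colon A\to B$ with stable cokernel $q\colon B\to C$, form the kernel $k\colon K\to B$ of $q$, and factor $m=kn$ through $k$ (legitimate since $qm=0$). Here $k$ is a normal monomorphism by construction, so the only thing left is to check that $n\colon A\to K$ has trivial cokernel. For this I would apply the stability condition to the map $0\to C$: the pullback of $q$ along $0\to C$ is exactly $K\to 0$, the induced comparison map is $n$, and stability then says precisely that $K\to 0$ is the cokernel of $n$. This yields the desired factorization.

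For the converse, I would take a factorization $m=kn$ with $n\colon A\to K$ of trivial cokernel and $k\colon K\to B$ normal, say $k=\ker f$ for some $f\colon B\to D$. Using that \cx is homological I would factor $f=jq$ with $q$ a regular (hence normal) epimorphism and $j$ a monomorphism; since $j$ is monic, $k$ is also the kernel of $q$, and since regular epimorphisms in a homological category are cokernels of their kernels, $q=\mathrm{coker}(k)$. The key observation is that trivial cokernel of $n$ means exactly the cancellation $\phi n=0\Rightarrow\phi=0$, which upgrades $q=\mathrm{coker}(k)$ to $q=\mathrm{coker}(kn)=\mathrm{coker}(m)$; thus $m$ has a cokernel. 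To establish \emph{stability}, I would pull $q$ back along an arbitrary $g\colon C'\to C$, obtaining a normal epimorphism $q'\colon B'\to C'$ whose kernel is the pullback $k'$ of $k$, so that $q'=\mathrm{coker}(k')$, and the same cancellation property of $n$ then promotes this to $q'=\mathrm{coker}(k'n)$, which is the content of the stability condition.

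I expect the main obstacle to be the converse direction, and specifically the stability check. It leans on three homological facts that must be deployed in the right order: that regular epimorphisms are normal (that is, cokernels of their kernels), that regular epimorphisms and kernels behave correctly under the pullbacks involved (kernels pull back to kernels, regular epimorphisms are pullback-stable), and that the trivial-cokernel hypothesis on $n$ is exactly the cancellation needed to pass from $\mathrm{coker}(k')$ to $\mathrm{coker}(k'n)$. A further point demanding care throughout is the distinction between Bourn-normal and genuinely normal monomorphisms, which coincide only in the exact case and so must be tracked separately here.
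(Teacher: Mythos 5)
Your proposal follows the paper's own proof essentially step for step: both directions reduce to the preceding proposition on stable cokernels; the forward implication applies stability to the pullback along $0\to C$ exactly as the paper does; and the converse writes $k=\ker f$, factorizes $f=jq$ with $q$ a regular (hence normal) epimorphism, and uses the cancellation property $\phi n=0\Rightarrow\phi=0$ of the trivial cokernel to pass from $\mathrm{coker}(k)$ to $\mathrm{coker}(kn)$, and again from $\mathrm{coker}(k')$ to $\mathrm{coker}(k'n)$ after pulling back.

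One step, however, is justified incorrectly as written, although the fix is local. In the stability check you declare that the kernel of $q'\colon B'\to C'$ is ``the pullback $k'$ of $k$''. If this means the pullback of $k\colon K\to B$ along the projection $g'\colon B'\to B$, it is not the kernel of $q'$: that pullback is $K\times_B B'\cong K\times\ker(g)$, which strictly contains $\ker(q')$ whenever $g$ has nontrivial kernel, and $q'$ is not its cokernel. (Concretely, pull back the quotient $q\colon\mathbb{Z}\to\mathbb{Z}/2$ along $g=q$: then $\ker(q')\cong 2\mathbb{Z}$, while the pullback of $k$ is $2\mathbb{Z}\times 2\mathbb{Z}$, whose cokernel in $B'$ is $\mathbb{Z}/2\neq C'$.) Moreover, with that reading the composite $k'n$ does not even typecheck, since $n$ lands in $K$ and the literal pullback has domain $K\times\ker(g)$. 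What your argument actually needs --- and what the paper uses --- is the unique lift $k'\colon K\to B'$ satisfying $g'k'=k$ and $q'k'=0$. This $k'$ \emph{is} the kernel of $q'$, because composing the pullback square defining $B'$ with the one defining $\ker(q')$ gives $\ker(q')\cong\ker(q)=K$ (this is the correct sense of your slogan ``kernels pull back to kernels''); it has domain $K$, so $k'n$ makes sense; and the identity $g'k'=k$ is precisely what identifies $k'n$ with the comparison morphism whose cokernel the stability condition concerns. With this correction your argument coincides with the paper's proof.
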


Finally, we combine all the characterizations into a single theorem:

\begin{theorem}\label{thm:semi-abelian-normal}
  For a category \cx, the following conditions are equivalent:
  \begin{enumerate}[(a)]
    \item \cx is homological, and has binary coproducts and stable coequalizers of equivalence relations;
  \item \cx is a semi-localization of a semi-abelian category \cc;
\item \cx is a semi-localization of a semi-abelian category \cc, and the units of the reflection are regular epimorphisms;
\item \cx is regular, is a semi-localization of its exact completion $\cx\exreg$ as a regular category, and $\cx\exreg$ is semi-abelian;
\item \cx is homological, has binary coproducts, and stable cokernels of Bourn-normal monomorphisms;
\item \cx is homological, has binary coproducts, and every Bourn-normal monomorphism factorizes as a monomorphism with trivial cokernel followed by a normal monomorphism;
\item \cx is a torsion-free subcategory of a semi-abelian category.
  \end{enumerate}
\end{theorem}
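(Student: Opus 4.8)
The plan is to observe that almost all of the work has already been done, so that the only genuinely new content of this theorem is condition~(g). Theorem~\ref{thm:semi-abelian} already establishes the equivalence of (a), (b), (c), and (d), while the two propositions immediately preceding this statement supply the equivalences (b)$\Leftrightarrow$(e) and (b)$\Leftrightarrow$(f). Thus it remains only to fold condition~(g) into this chain, and the cleanest route is to prove directly that (c) and (g) are equivalent, since both concern a \emph{semi-abelian} base category together with the regular-epimorphism condition on units.

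The key input is the result of \cite{BG-torsion} recalled in the introduction: for a homological category \cc, the torsion-free subcategories of \cc are precisely the semi-localizations of \cc whose reflector has regular-epimorphism components. Since every semi-abelian category is in particular homological, this characterization applies verbatim with \cc taken to be semi-abelian, and both implications follow at once. For (c)$\Rightarrow$(g): if \cx is a semi-localization of a semi-abelian category \cc with regular-epimorphism units, then \cc is homological, so \cite{BG-torsion} identifies \cx as a torsion-free subcategory of \cc; as \cc is semi-abelian, this is exactly~(g). For (g)$\Rightarrow$(c): if \cx is a torsion-free subcategory of a semi-abelian category \cc, then again \cc is homological, so by \cite{BG-torsion} \cx is a semi-localization of \cc whose units are regular epimorphisms, which is precisely~(c) witnessed by the same \cc.

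The only point requiring care is that the semi-abelian category \cc is existentially quantified in both (c) and (g); I would be careful to use the \emph{same} witnessing \cc on each side of the equivalence, so that the cited characterization, which concerns subcategories of a fixed homological base, applies directly. Beyond this bookkeeping there is no real obstacle: the torsion subcategory \ct completing the torsion theory is recovered from the semi-localization as the class of objects sent to the zero object, exactly as in the correspondence of \cite{BG-torsion}, and the semi-left-exactness of the reflector is already built into the notion of semi-localization. The main conceptual step is simply to recognize that condition~(g) connects to the already-established condition~(c) through this single external result, so that no fresh argument involving the exact completion or the relational characterization of protomodularity is needed here.
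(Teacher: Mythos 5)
Your proposal is correct and follows essentially the same route as the paper: the equivalence of (a)--(f) is inherited from Theorem~\ref{thm:semi-abelian} and the two preceding propositions, and condition~(g) is linked to~(c) via the result of \cite{BG-torsion} that torsion-free subcategories of a homological (hence of a semi-abelian) category are exactly the semi-localizations with regular-epimorphic units. Your extra care about using the same witnessing category \cc on both sides is a sound observation, implicit in the paper's own one-line argument.
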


\proof
The equivalence of all conditions but the last has been proved in this section; it remains to recall from Section~\ref{sect:torsion-free} that the torsion-free subcategories of a homological category \cc (and so in particular of a semi-abelian category \cc) are the semi-localizations of \cc whose reflections have regular epimorphic units. 
\endproof
\begin{example}
Let $\mathbb T$ be a semi-abelian algebraic theory: this means that $\mathbb T$ has a unique constant $0$, binary terms $\alpha_i (x,y)$ (for $i \in \{ 1, \cdots , n\}$) and an $(n+1)$-ary term $\beta$ such that the identities $\alpha_i (x,x)=0$ (for $i \in \{ 1, \cdots , n\}$) and 
$\beta( \alpha_1 (x,y), \cdots, \alpha_n (x,y), y)= x$ hold \cite{BJ}.
The category $\mathbb T(\mathsf{Top})$ of topological semi-abelian algebras is homological \cite{BC}, with regular epimorphisms given by open surjective homomorphisms, and this category clearly has binary coproducts (it is actually cocomplete). Given any Bourn-normal monomorphism $n : (N, \tau_N)  \rightarrow (G, \tau_G)$ in $\mathbb T(\mathsf{Top})$, take the canonical quotient $\pi_G \colon (G, \tau_G) \rightarrow (G/N, \tau_q)$, where $G/N$ is the quotient algebra of $G$ by its normal subalgebra $N$, equipped with the quotient topology $\tau_q$. The kernel $k \colon (N, \tau_i) \rightarrow (G, \tau_G)$ of $\pi$ in $\mathsf{\mathbb T}(\mathsf{Top})$ is such that $\tau_i$ is the topology on $N$ induced by $\tau_G$. The canonical factorization $m \colon (N, \tau_N) \rightarrow (N, \tau_i)$ making the following triangle commute is then the identity map in the underlying semi-abelian variety $\mathbb T(\mathsf{Set})$:
$$\xymatrix{
& (N, \tau_i)   \ar[d]^{k} & \\
(N, \tau_N)  \ar[ur]^{m} \ar[r]_{n} & (G, \tau_G) \ar[r]_{\pi} & (G/N, \tau_q)}$$
It follows that any Bourn-normal morphism factorizes as a Bourn-normal monomorphism $m$ with trivial cokernel followed by a normal monomorphism $k$. Any category $\mathbb T(\mathsf{Top})$ of topological semi-abelian algebras is then the semi-localization of a semi-abelian category by Theorem \ref{thm:semi-abelian-normal} (f). A similar argument also applies to the category  $\mathbb T(\mathsf{Haus})$ of Hausdorff semi-abelian algebras, which is then also a semi-localization of a semi-abelian category. Of course, one could take $\mathbb T$ to be the theory of groups, so that in particular this result applies to the categories $\mathsf{Grp} (\mathsf{Top})$ and $\mathsf{Grp} (\mathsf{Haus})$ of topological groups and of Hausdorff groups, respectively.
\end{example}

\begin{example}
Let $\cc=\mathsf{Grp}$ be the category of groups. It is well known that any torsion-free subcategory $\cx$ of $\mathsf{Grp}$ contains all free groups, so that in particular for any group $G$ in $\cx$ there is a normal epimorphism $p \colon X \rightarrow G$,  where $X \in \cx$ (see \cite{EverGran2}, for instance). Since the inclusion of $\cx$ into $\mathsf{Grp}$ is a fully faithful regular functor, and $\cx$ is closed in $\mathsf{Grp}$ under subgroups, it follows  by Proposition~\ref{prop:exreg-characterization} that the exact completion of $\cx$ as a regular category is the category of groups: $\cx\exreg = \mathsf{Grp}$. The same arguments show that the category $\mathsf{Ab}$ of abelian groups is the exact completion of any torsion-free subcategory, as a regular category.   \end{example}

\begin{remark}\label{Descent}
 We now give an example showing that a regular category having the property that regular epimorphisms are effective descent morphisms is not necessarily a semi-localization of an exact category.

Let \cc be the category of abelian groups equipped with a commutative bilinear multiplication, not necessarily associative or unital. This category is a semi-abelian variety. Let \cx be the full subcategory consisting of those objects $A$ with the property that $a^2=0$ implies $a=0$. This is clearly closed in \cc under limits; it is also closed under subobjects and extensions (although not under quotients), so that for any short exact sequence 
 $$
 \xymatrix{0 \ar[r] & K \ar[r] & A \ar[r] & B \ar[r] & 0
 }
 $$
in $\cc$, the object $A$ lies in \cx whenever  both $K$ and $B$ do so. Then \cx is reflective in \cc, each component $\eta_A$ of the unit of the adjunction is a regular epimorphism, and \cx is homological with binary coproducts. It follows that, in pullback  (\ref{pullback}), the object $A$ is in $\cx$ whenever $E \times_B A$, $E$ and $B$ belong to $\cx$, and so that regular epimorphisms in \cx are effective for descent since they are so in the semi-abelian category $\cc$.

Furthermore, the fact that free algebras in \cc lie in \cx implies that any object $C\in\cc$ has a regular epimorphism $X\to C$ with $X\in\cx$, and now by Proposition~\ref{prop:exreg-characterization} the category \cc is the exact completion of the regular category \cx. 

On the other hand, \cx is not a torsion-free subcategory of \cc, as was proved in \cite{JT-torsion}; thus by Theorem \ref{thm:semi-abelian-normal} \cx cannot be a semi-localization of any semi-abelian category. It is not hard to adapt the argument of \cite{JT-torsion} to construct a particular equivalence relation in \cx whose coequalizer is not stable under pullback.

\end{remark}

\section{Hereditary torsion theories}\label{hereditary}

In this section we give a characterization of hereditarily-torsion-free subcategories of semi-abelian categories.

Given a torsion theory $(\ct, \cx)$ in a semi-abelian category $\cc$, the torsion-free subcategory $\cx$ is always closed under subobjects, while the torsion subcategory $\ct$ is closed under quotients. When the torsion subcategory is also closed under subobjects, the torsion theory is said to be {\em hereditary}. In terms of the torsion-free part, the torsion theory is hereditary when the following property holds: if $B\to C$ is a monomorphism and $LC=0$ then also $LB=0$. 
This is due to the fact that the objects in \ct consists of the objects $C$ in \cc for which $LC=0$.  
\black
\begin{theorem}\label{thm:hereditary-semiabelian} For a category \cx the following conditions are equivalent:
\begin{enumerate}[(a)]
\item \cx is a hereditarily-torsion-free subcategory of a semi-abelian category;
\item \cx is a torsion-free subcategory of a semi-abelian category $\cc$ with the property that the reflector $L \colon \cc \rightarrow \cx$ preserves monomorphisms;
\item \cx  is homological, has binary coproducts, every Bourn-normal mono-morphism factorizes as a Bourn-normal monomorphism with trivial cokernel followed by a normal mono\-morphism, and the Bourn-normal monomorphisms with trivial cokernel are stable under pullback.
\end{enumerate}
  \black
\end{theorem}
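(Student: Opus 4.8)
The plan is to treat this as an enrichment of Theorem~\ref{thm:semi-abelian-normal}: conditions (b) and (c) already encode (via that theorem) the requirement that \cx be a torsion-free subcategory of a semi-abelian category, and the genuinely new ingredient is only the word \emph{hereditary}. So the first move is to record the standard reformulation: for a torsion-free subcategory $\cx\subseteq\cc$ of a semi-abelian category, with reflector $L\colon\cc\to\cx$ and torsion subcategory $\ct=\{C : LC=0\}$, the torsion theory is hereditary \emph{if and only if} $L$ preserves monomorphisms, which yields (a)$\Leftrightarrow$(b) at once. One direction is immediate, since $LB\to LC$ is monic whenever $B\to C$ is, forcing $LB=0$ when $LC=0$. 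For the converse I would take a monomorphism $m\colon A\to B$ and analyse $\ker(Lm)\le LA$: pulling the regular epimorphism $\eta_A\colon A\to LA$ back along $\ker(Lm)\hookrightarrow LA$ produces $K=\eta_A^{-1}(\ker Lm)$, which equals $\ker(\eta_B\circ m)=m^{-1}(TB)$ and is therefore a subobject of the torsion object $TB$. By heredity $K\in\ct$; since $K\twoheadrightarrow\ker(Lm)$ is a regular epimorphism (regular epimorphisms being pullback-stable) and \ct is closed under quotients, $\ker(Lm)\in\ct$. But $\ker(Lm)\le LA\in\cx$ and \cx is closed under subobjects, so $\ker(Lm)\in\ct\cap\cx=0$, whence $Lm$ is monic.

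The second move is to identify ``$L$ preserves monomorphisms'' with the extra pullback-stability clause of~(c), working inside $\cx\exreg$, which by Theorem~\ref{thm:semi-abelian-normal} is semi-abelian and carries the torsion theory $(\ct,\cx)$ with reflector $L\colon\cx\exreg\to\cx$. The dictionary I would set up is: a Bourn-normal monomorphism $n\colon A\to K$ with $A,K\in\cx$ and trivial cokernel in \cx is exactly a normal monomorphism in $\cx\exreg$ whose cokernel $Q$ is torsion, since $\mathsf{coker}_{\cx}(n)=L(\mathsf{coker}_{\cx\exreg}(n))=LQ$. Moreover every torsion object arises this way: covering $Q\in\ct$ by a regular epimorphism $X\to Q$ with $X\in\cx$ and taking its kernel $A\le X$ (which lies in \cx, being a subobject of $X$) presents $Q$ as the cokernel of such an $n\colon A\to X$. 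With this dictionary, heredity $\Rightarrow$ stability follows by pulling the short exact sequence $0\to A\to K\to Q\to 0$ back along an arbitrary $g\colon K'\to K$ in \cx: the resulting cokernel $Q'$ admits a \emph{monomorphism} $Q'\to Q$, so $Q'\le Q$ is torsion by heredity, i.e.\ the pulled-back Bourn-normal monomorphism $A'\to K'$ again has trivial cokernel. Conversely, stability $\Rightarrow$ heredity: given $Q'\le Q$ with $Q\in\ct$, present $Q$ as $\mathsf{coker}(n\colon A\to X)$ as above, pull the regular epimorphism $X\to Q$ back along $Q'\hookrightarrow Q$ to get $X'\le X$ in \cx with the \emph{same} kernel $A$, and observe that $A\to X'$ is the pullback of $A\to X$ along $X'\hookrightarrow X$; the stability hypothesis makes it a Bourn-normal monomorphism with trivial cokernel, so $LQ'=0$ and $Q'\in\ct$. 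This closes the loop (c)$\Leftrightarrow$(b), and completes the proof since (a)$\Leftrightarrow$(b) is already in hand.

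I expect the main obstacle to be the careful bookkeeping of the dictionary between the internal structure of \cx and the torsion theory on $\cx\exreg$, rather than any single hard estimate. Two points need the protomodular/semi-abelian structure of $\cx\exreg$ to be pinned down: first, that a Bourn-normal monomorphism of \cx is sent by the inclusion $\cx\to\cx\exreg$ to a genuine normal monomorphism (so that effectivity of the associated equivalence relation in $\cx\exreg$ makes ``trivial cokernel in \cx'' correspond to ``torsion cokernel in $\cx\exreg$'' via $L$); and second, the fact that pulling a normal monomorphism $A\to K$ back along $g\colon K'\to K$ induces a \emph{monomorphism} $Q'\to Q$ on cokernels. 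The latter is the linchpin of the heredity $\Rightarrow$ stability direction and is where the homological calculus (image factorizations together with the protomodular machinery behind Theorem~\ref{caracterisation}) does the real work; once it is available, the remaining verifications are routine diagram chases in $\cx\exreg$.
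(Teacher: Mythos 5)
Your proposal is correct and follows essentially the same route as the paper: the same pullback/image-factorization diagram shows that heredity (equivalently, mono-preservation of the reflector) yields the pullback-stability clause of (c), and the same covering argument in $\cx\exreg$ --- presenting a torsion object as the cokernel of a Bourn-normal monomorphism between objects of \cx and pulling that kernel back along the pullback of the given monomorphism --- gives the converse. The only real difference is that you prove the equivalence (a)$\Leftrightarrow$(b) directly (correctly, via $\ker(Lm)\in\ct\cap\cx=0$ together with the protomodular fact that a morphism with zero kernel is monic), whereas the paper simply cites this equivalence from Lemma 5.2 of \cite{BG-torsion}.
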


\proof
The equivalence between the first two conditions is known (see Lemma $5.2$ in \cite{BG-torsion}).

Suppose then that \cx is a torsion-free subcategory of a semi-abelian category \cc, with monomorphism preserving reflector $L \colon \cc \rightarrow \cx$. We are going to prove that Bourn-normal monomorphisms in \cx with trivial cokernel are stable under pullback, so that $(c)$ will hold.

A Bourn-normal monomorphism in \cx is also a Bourn-normal monomorphism in \cc, since the inclusion $\cx\to\cc$ preserves finite limits; but a Bourn-normal monomorphism in the semi-abelian category \cc is in fact a normal monomorphism.

 Let $e\colon A\to B$ be a Bourn-normal monomorphism with trivial cokernel in \cx, and $b\colon B'\to B$ an arbitrary morphism in \cx. Construct the diagram
$$\xymatrix{
A' \ar[r]^{e'} \ar[d]_{a} & B' \ar[r]^{q'} \ar[d]^{b} & 
C' \ar[d]^{c} \\
A \ar[r]_{e} & B \ar[r]_{q} & C }$$
in which the square on the left is a pullback, $q$ is the cokernel in \cc of $e$, and $q'$ and $c$ are obtained via the image factorization of $qb$. Bourn-normal monomorphisms are always stable under pullback; we need to prove that $e'$ has trivial cokernel. 

If $q'f=0$ for a morphism $f$ in \cc, then $qbf=cq'f=0$ and so $bf=eg$ for a unique $g$; thus by the universal property of the pulllback, $f$ factorizes (necessarily uniquely) as $f=e'h$. Thus $e'$ is the kernel of $q'$.  But $q'$ is a regular epimorphism in \cc, thus it is the cokernel of its kernel, namely $e'$. 
The cokernel in \cx of $e$ is (as an object) given by $LC$; we are assuming that this is $0$. The assumption that $L$ preserves monomorphisms implies that $L(c)$ is a monomorphism, and  $LC'=0$.
Since $q'\colon B'\to C'$ is the cokernel in \cc of $e'$, this implies that $e'$ has trivial cokernel (in \cx) as required.

Finally, suppose that \cx satisfies (c), so that  it  is a torsion-free subcategory of some semi-abelian category \cc, and so in particular of $\cx\exreg$, and suppose further that Bourn-normal monomorphisms with trivial cokernel in \cx are stable under pullback. Write $L\colon\cx\exreg\to\cx$ for the reflection. We need to show that if $LA=0$ and $m\colon B\to A$ is a monomorphism, then $LB=0$.

We can cover $A$ via a normal epimorphism $q\colon X\to A$ with $X\in\cx$. Write $k\colon K\to X$ for the kernel of $q$; since \cx is closed under subobjects also $K\in\cx$. Consider the diagram
$$\xymatrix{
& Y \ar[r]^p \ar[d]^{n} & B \ar[d]^{m} \\
K \ar[ur]^{h} \ar[r]_{k} & X \ar[r]_q & A }$$
in which the square is a pullback and $h$ is the unique morphism with $nh=k$ and $ph=0$. By stability of cokernels of Bourn-normal monomorphisms, $p$ is the cokernel of $h$. Since $n$ is a pullback of the monomorphism $m$ it is itself a monomorphism; it follows that $h$ is the pullback of $k$ along $n$. By our assumption, then, $h$ is a Bourn-normal monomorphism with trivial cokernel (in \cx); but this cokernel is $LB$, and so $LB=0$ as required.
\endproof
\begin{example}\label{groupoids}
Let $\cc$ be a semi-abelian category, and write $\mathsf{Eq}(\cc)$ and $\mathsf{Grpd}(\cc)$ for the categories of (internal) equivalence relations and of (internal) groupoids in $\cc$, respectively. The category $\mathsf{Grpd}(\cc)$ is itself semi-abelian \cite{BG-Central}, 
and $\mathsf{Eq}(\cc)$ is easily seen to be a reflective subcategory of $\mathsf{Grpd}(\cc)$ with the property that the units of the reflection are regular epimorphisms. As shown in \cite{BG-torsion}, the category $\mathsf{Eq}(\cc)$ actually is a hereditarily-torsion-free subcategory of $\mathsf{Grpd}(\cc)$, so that it satisfies the equivalent conditions of Theorem \ref{thm:hereditary-semiabelian}. Since for any groupoid $X$ in \cc there is a regular epimorphism $E \rightarrow X$ in $\mathsf{Grpd}(\cc)$ whose domain $E$ is an equivalence relation in $\cc$, it follows that the category $\mathsf{Grpd}(\cc)$ is the exact completion of $\mathsf{Eq}(\cc)$ as a regular category: $\mathsf{Eq}(\cc)\exreg = \mathsf{Grpd}(\cc) $. 

Remark that the category $\mathsf{Grpd}(\cc)$ is a variety (of universal algebras) whenever $\cc$ is a semi-abelian variety, so that $\mathsf{Eq}(\cc)$ is a quasivariety, in this case.
In particular, when $\cc$ is the semi-abelian variety $\mathsf{Grp}$ of groups, the category of equivalence relations becomes the quasivariety $\mathsf{Norm}$ of normal monomorphisms of groups. This category is then a semi-localization of the semi-abelian category $\mathsf{XMod}$ of crossed modules, which is known to be equivalent to the category $\mathsf{Grpd}(\mathsf{Grp})$ of internal groupoids in the category of groups. Accordingly, the category of crossed modules can be seen as the exact completion of the category of normal monomorphisms: $\mathsf{Norm}\exreg = \mathsf{XMod}$.
\end{example}
\begin{example}\label{reduced}
A commutative ring $A$ is \emph{reduced} if the implication $a^n = 0 \Rightarrow a=0$ holds for any element $a \in A$. The category $\mathsf{RedRng}$ of reduced rings is a hereditarily-torsion-free subcategory of the semi-abelian category $\mathsf{CRng}$ of commutative rings (not necessarily with units). Any commutative ring is a quotient of a free commutative ring, which belongs to $\mathsf{RedRng}$, so that 
the full inclusion of $\mathsf{RedRng}$ in $\mathsf{CRng}$ can be seen as the inclusion of $\mathsf{RedRng}$ into its exact completion $\mathsf{RedRng}\exreg$.
The category $\mathsf{RedRng}$ then satisfies the equivalent conditions of Theorem \ref{thm:hereditary-semiabelian}. \end{example}

\section{The abelian case}

We now turn to torsion-free subcategories of abelian categories. 

\begin{theorem}\label{thm:additive}
  For a category \cx, the following conditions are equivalent:
  \begin{enumerate}[(a)]
    \item \cx is regular, additive, and has  stable coequalizers of equivalence relations;
  \item \cx is a semi-localization of an abelian category \cc;
\item \cx is a semi-localization of an abelian category \cc, and the units of the reflection are regular epimorphisms;
\item \cx is regular, is a semi-localization of its exact completion $\cx\exreg$ as a regular category, and $\cx\exreg$ is additive;
\item \cx is regular, additive, and has stable cokernels;
\item \cx is additive, every morphism $f\colon A\to D$ has a factorization 
$$\xymatrix{
A \ar[r]^{q} & B \ar[r]^{g} & C \ar[r]^{k} & D}$$
where $q$ is a regular epimorphism, $g$ is an epimorphism and a monomorphism, and $k$ is a regular monomorphism; and regular (=normal) epimorphisms are pullback stable;
\item \cx is a torsion-free subcategory of an abelian category.
  \end{enumerate}
\endproof
\end{theorem}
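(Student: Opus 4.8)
The plan is to prove Theorem~\ref{thm:additive} by following the template established for the semi-abelian case in Theorem~\ref{thm:semi-abelian-normal}, leaning on the observation that the additive setting is a strengthening of the semi-abelian one. The logical skeleton I would use is to establish the cycle $(a)\Rightarrow(d)\Rightarrow(c)\Rightarrow(b)\Rightarrow(a)$ together with the side-equivalences to $(e)$, $(f)$, and $(g)$. Since $(g)\Leftrightarrow(c)$ follows, as in the semi-abelian case, from the characterization of torsion-free subcategories recalled in Section~\ref{sect:torsion-free} (they are exactly the semi-localizations whose reflection units are regular epimorphisms), the core work lies in the first four conditions and in the two explicit reformulations $(e)$ and $(f)$.

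For the main cycle, the implications $(d)\Rightarrow(c)\Rightarrow(b)$ are routine: since \cx is closed in $\cx\exreg$ under subobjects the units are regular epimorphisms, giving $(d)\Rightarrow(c)$, and $(c)\Rightarrow(b)$ is immediate. For $(b)\Rightarrow(a)$, I would argue that any full reflective subcategory of an additive category is additive (the hom-functors and the biproduct structure are preserved and reflected along a fully faithful right adjoint), so \cx is additive, while regularity and stable coequalizers of equivalence relations follow from Theorem~\ref{thm:exact}. The substantive step is $(a)\Rightarrow(d)$: given Theorem~\ref{thm:exact}, which already supplies that \cx is a semi-localization of $\cx\exreg$, it remains only to show that $\cx\exreg$ is additive whenever \cx is. I would prove this by transporting the additive structure through the exact completion exactly as additivity was handled for pointedness and coproducts in the proof of Theorem~\ref{thm:semi-abelian}: the biproduct functor $\cx\x\cx\to\cx$ is regular and so passes to the exact completions, and the required zig-zag identities are preserved since the construction of $\cx\exreg$ is functorial with respect to regular functors.

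For the reformulations, note that an additive category is in particular pointed and protomodular, and in the additive setting every monomorphism is Bourn-normal and every regular epimorphism is normal; moreover a morphism with trivial cokernel in an additive regular category is automatically a regular (indeed split, up to the kernel) epimorphism onto its image in the relevant sense. Thus condition $(e)$, stability of cokernels, is the additive specialization of ``stable cokernels of Bourn-normal monomorphisms'' from Theorem~\ref{thm:semi-abelian-normal}(e), and I would deduce $(a)\Leftrightarrow(e)$ by specializing that equivalence, using that in an additive category the notions of Bourn-normal monomorphism and of monomorphism-that-is-a-kernel interact cleanly. Condition $(f)$ unpacks stable cokernels into an explicit $(q,g,k)$ factorization: $q$ the coequalizer of the kernel pair, $k$ the regular (=normal) monomorphism into the codomain, and $g$ the comparison bimorphism measuring the failure of exactness; I would verify this is equivalent to $(e)$ by checking that stability of the cokernel $q$ is exactly the pullback-stability of the regular-epimorphism part of this factorization.

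\textbf{The main obstacle} I anticipate is the verification that the comparison map $g$ in condition $(f)$ is genuinely both a monomorphism and an epimorphism without being invertible---this is the precise point where the failure of \cx to be abelian (rather than merely regular additive) is visible, and it is what distinguishes a semi-localization from a localization. Pinning down that $g$ arises canonically, that $(f)$ is self-consistent, and that it matches $(e)$ requires care with the image factorization in a regular additive category that need not have all cokernels; I would treat this by reducing to the analogous non-additive reformulations already proved in Section~\ref{Semi-abelian} and then identifying the trivial-cokernel morphisms with the bimorphisms, rather than constructing the factorization from scratch.
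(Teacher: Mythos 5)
Your overall architecture matches the paper's: the equivalence of (a)--(d) reduces to showing that $\cx\exreg$ is additive whenever \cx is regular and additive, (g) comes from the torsion-free/semi-localization correspondence already used for Theorem~\ref{thm:semi-abelian-normal}, and (e), (f) are obtained by specializing Theorem~\ref{thm:semi-abelian-normal}(e),(f) via the fact that in an additive category the Bourn-normal monomorphisms are exactly the monomorphisms. However, there is a genuine gap at the crucial step $(a)\Rightarrow(d)$. Transporting the biproduct functor $\cx\times\cx\to\cx$ (a left-and-right adjoint to the diagonal) through $(-)\exreg$ proves only that $\cx\exreg$ is \emph{semi-additive}: biproducts equip each hom-set with a commutative monoid structure, but they do not provide additive inverses, and semi-additivity plus exactness does not imply additivity (the category of commutative monoids is semi-additive, Barr-exact, and has biproducts, yet is not additive). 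So no purely functorial argument of the kind you describe can finish the proof; the point where additivity of \cx itself, rather than just its biproducts, must enter is missing. The paper supplies exactly this: cover an arbitrary $A\in\cx\exreg$ by a regular epimorphism $p\colon X\to A$ with $X\in\cx$, form $-p := p(-1_X)$ using the inverse $-1_X$ available because \cx is additive, observe that $pb=0$ if and only if $(-p)b=0$ so that $-p$ factorizes uniquely through $p$ as a morphism $-1_A\colon A\to A$, and check that $-1_A$ is inverse to $1_A$ in the hom-monoid. Without this covering argument (or an equivalent one) your proof establishes only semi-additivity of $\cx\exreg$.

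A smaller point: your anticipated obstacle concerning the middle map $g$ in (f) is somewhat misdirected---nothing in (f) requires $g$ to be non-invertible, so there is nothing to verify there---and your proposed direct comparison of (f) with (e) is vaguer than necessary. The paper's route is cleaner: factor an arbitrary morphism as a regular epimorphism $q$ followed by a monomorphism, apply Theorem~\ref{thm:semi-abelian-normal}(f) to factor that monomorphism as a trivial-cokernel monomorphism followed by a normal (=regular) monomorphism $k$, and then use the observation that in an additive category a morphism has trivial cokernel if and only if it is an epimorphism; this is precisely where the epimorphism-and-monomorphism $g$ comes from. Your closing suggestion to reduce to the non-additive reformulations of Section~\ref{Semi-abelian} and identify the trivial-cokernel morphisms is indeed the right move, so this portion of your proposal is fixable exactly along the lines you indicate.
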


\proof
Any full subcategory of an additive category which is closed under finite products will be additive. Thus for the equivalence of the first four conditions it will suffice to prove that if \cx is additive and regular then $\cx\exreg$ is additive. First we show that $\cx\exreg$ is semi-additive. For this it is convenient to use the characterization of semi-additivity given in \cite{non-canonical}: a category with finite products and coproducts is semi-additive if and only if it has a natural family of isomorphisms $X+Y\cong X\x Y$. Equivalently, this says that the diagonal functor $\cx\to\cx\x\cx$ has a left adjoint which is also a right adjoint. But then this adjunction lives in the 2-category of regular categories, and so passing to $\cx\exreg$ we get corresponding adjunctions in the 2-category of exact categories. Thus $\cx\exreg$ is semi-additive. 

In a semi-additive category each hom-set has a natural structure of commutative monoid. The semi-additive category will be additive if and only if each identity morphism has an inverse in the corresponding commutative monoid. Any object $A\in\cx\exreg$ can be covered by an object $X\in\cx$ using a regular epimorphism $p\colon X\to A$. Now $1_X\colon X\to X$ has inverse $-1_X$, since \cx is additive, and the composite $p(-1_X)$ will be inverse to $p$. For any morphism $b \colon B\to X$ we have $pb=0$ if and only if $(-p)b=0$; it follows that $-p$ factorizes uniquely through $p$ as a morphism $-1_A\colon A\to A$, and it is straightforward to show that $-1_A$ is inverse to $1_A$. 

This proves that the first four conditions are all equivalent to the last; we now turn to the remaining two, using Theorem~\ref{thm:semi-abelian-normal}  and the fact that in an additive category the Bourn-normal morphisms are precisely the monomorphisms \cite[Theorem~3.2.16]{BB}. 

Thus in Theorem~\ref{thm:semi-abelian-normal}(e), the condition that Bourn-normal monomorphisms have stable cokernels becomes the condition that all monomorphisms have stable cokernels. But since any morphism factorizes as a regular epimorphism followed by a monomorphism, this is equivalent to the condition that all morphisms have stable cokernels.

Similarly in Theorem~\ref{thm:semi-abelian-normal}(f), we get factorizations of arbitrary mono\-morphisms into a monomorphism with trivial cokernel and a normal (=regular) monomorphism. It remains to observe that in an additive category a morphism has trivial cokernel if and only if it is an epimorphism. 
\endproof

The condition appearing in (f) above says precisely that \cx is {\em almost abelian} in the sense of Rump \cite{Rump}, where the equivalence of (f) and (g) was also proved. As explained in \cite{Rump} the categories of real (or complex) normed vector spaces, Banach spaces (with bounded linear maps as morphisms), and
also the category of locally compact abelian groups are all examples of almost abelian categories.

We can now combine this with Theorem~\ref{thm:hereditary-semiabelian} to give a characterization of hereditarily-torsion-free subcategories of abelian categories.

\begin{theorem}
 A category \cx is a hereditarily-torsion-free subcategory of an abelian category if and only if it is additive and every morphism $f\colon A\to D$ has a factorization 
$$\xymatrix{
A \ar[r]^q & B \ar[r]^g & C \ar[r]^k & D }$$
where $q$ is a regular epimorphism, $g$ is an epimorphism and a monomorphism, and $k$ is a regular monomorphism; and if furthermore, these factorizations are stable under pullback.
\end{theorem}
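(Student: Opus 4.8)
The plan is to combine the characterization of torsion-free subcategories of abelian categories from Theorem~\ref{thm:additive} with the hereditarity criterion from Theorem~\ref{thm:hereditary-semiabelian}, specializing both to the additive setting. Since an abelian category is precisely a semi-abelian additive category, a hereditarily-torsion-free subcategory of an abelian category is in particular a hereditarily-torsion-free subcategory of a semi-abelian category, so the relevant equivalent conditions of Theorem~\ref{thm:hereditary-semiabelian} apply; it remains only to rewrite condition (c) of that theorem in purely additive terms, and to check that the abelian (rather than merely semi-abelian) hypothesis can be recovered.

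First I would recall, as in the proof of Theorem~\ref{thm:additive}, that in an additive category the Bourn-normal monomorphisms are exactly the monomorphisms, and that a morphism has trivial cokernel if and only if it is an epimorphism \cite[Theorem~3.2.16]{BB}. With this dictionary, the first two clauses of Theorem~\ref{thm:hereditary-semiabelian}(c)---homological with binary coproducts, plus the factorization of every Bourn-normal monomorphism as one with trivial cokernel followed by a normal monomorphism---translate, just as in the passage from Theorem~\ref{thm:semi-abelian-normal} to Theorem~\ref{thm:additive}(f), into the statement that \cx is additive and admits the displayed factorization $f=kgq$ with $q$ a regular epimorphism, $g$ an epimorphism-monomorphism, and $k$ a regular monomorphism. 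The crucial point is that factoring an \emph{arbitrary} morphism $f$ this way is equivalent to factoring the monomorphism part after its regular-epi/mono factorization, so that the condition on all morphisms is really a condition on monomorphisms, matching Theorem~\ref{thm:hereditary-semiabelian}(c).

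Next I would translate the final clause of Theorem~\ref{thm:hereditary-semiabelian}(c), namely that Bourn-normal monomorphisms with trivial cokernel are stable under pullback. Under the additive dictionary a Bourn-normal monomorphism with trivial cokernel is exactly an epimorphism which is also a monomorphism, i.e.\ the middle map $g$ in the factorization; hence the stability clause becomes precisely the requirement that the factorizations $f=kgq$ be stable under pullback, which is the condition stated in the theorem. I would be careful here to ensure that pullback-stability of the monomorphic epimorphisms is genuinely the same as pullback-stability of the whole factorization: since regular epimorphisms are already pullback-stable in any regular category (and these \cx are regular by the earlier theorems) and regular monomorphisms are automatically stable, the only substantive stability is that of the middle factor, so the two formulations agree.

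The main obstacle I anticipate is bookkeeping rather than conceptual: one must verify that specializing Theorem~\ref{thm:hereditary-semiabelian} to the additive case really does land inside the \emph{abelian} world and not merely the semi-abelian one, so that the ambient category produced is abelian. This is handled by invoking Theorem~\ref{thm:additive}: the equivalence of its conditions (a)--(g) shows that an additive \cx with the stated factorization and stability is a torsion-free subcategory of an \emph{abelian} category (indeed of $\cx\exreg$, which is additive hence abelian). Thus the hereditarity input from Theorem~\ref{thm:hereditary-semiabelian} upgrades ``torsion-free in abelian'' to ``hereditarily-torsion-free in abelian'', and conversely any hereditarily-torsion-free subcategory of an abelian category satisfies both sets of conditions. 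Assembling these two translations then yields the claimed equivalence directly, with no further computation required.
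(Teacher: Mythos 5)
Your proposal is correct and follows essentially the same route as the paper's proof: both combine Theorem~\ref{thm:additive} with Theorem~\ref{thm:hereditary-semiabelian}, use the additive dictionary (Bourn-normal monomorphisms are monomorphisms, trivial cokernel means epimorphism), and reduce pullback-stability of the whole factorization to stability of the epi-mono middle factor, since regular epimorphisms are stable by regularity and (regular) monomorphisms are always stable. The paper states this decomposition in one compressed sentence; your write-up simply makes the same bookkeeping explicit.
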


\proof
Monomorphisms and regular monomorphisms are always stable under pullback; stability of regular epimorphisms is regularity, which was needed  already for a torsion-free subcategory; stability of epimorphisms which are monomorphisms is the extra ingredient from Theorem~\ref{thm:hereditary-semiabelian}.
\endproof

This characterization was also given in \cite{Rump}; there the name {\em integral almost abelian} was used for such a category.
The characterization is simpler than that given in \cite{CarboniMantovani}, but in that paper the authors developed a framework that could be used both for pretoposes and abelian categories. In particular, the assumption made in \cite{CarboniMantovani} that every strong equivalence relation is effective is not needed in the additive context.

\bibliographystyle{plain}

\end{document}